\newcommand{\rd}{\mathrm{d}}
\newcommand{\R}{\mathbb{R}}
\newcommand{\Z}{\mathbb{Z}}
\newcommand{\C}{\mathbb{C}}
\newcommand{\ri}{{\mathrm{i}}}
\newcommand{\re}{{\mathrm{e}}}
\newcommand{\erfc}{\mathrm{erfc}}
\begin{document}
\title{Computing Fresnel Integrals via Modified Trapezium Rules}
\author{Mohammad Alazah \and \mbox{Simon~N.~Chandler-Wilde} \and Scott~La~Porte}
\authorrunning{M.~Alazah, S.~N.~Chandler-Wilde and S.~La Porte}
\institute{Mohammad Alazah \and Simon N.~Chandler-Wilde \at Department of Mathematics and Statistics, University of Reading, Whiteknights, PO Box 220, Reading RG6 6AX, UK\\\email{m.a.m.alazah@pgr.reading.ac.uk}\\\email{S.N.Chandler-Wilde@reading.ac.uk}
\and
Scott La Porte \at Department of Mathematical Sciences, John Crank Building,
Brunel University,
Uxbridge UB8 3PH, UK\\\email{scottis@ntlworld.com}}
\dedication{Dedicated to David Hunter on the occasion of his 80th birthday}
\date{\today}
\maketitle
\begin{abstract}
In this paper we propose methods for computing Fresnel integrals based on truncated trapezium rule approximations to integrals on the real line, these trapezium rules  modified to take into account poles of the integrand near the real axis. Our starting point is a method for computation of the error function of complex argument due to Matta and Reichel ({\em J. Math. Phys.} {\bf 34} (1956), 298--307) and Hunter and Regan ({\em Math. Comp.} {\bf 26} (1972), 539--541). We construct approximations which we prove are exponentially convergent as a function of $N$, the number of quadrature points, obtaining explicit error bounds which show that accuracies of $10^{-15}$ uniformly on the real line are achieved with $N=12$, this confirmed by computations. The approximations we obtain are attractive, additionally, in that they maintain small relative errors for small and large argument, are analytic on the real axis (echoing the analyticity of the Fresnel integrals), and are straightforward to implement.  

\subclass{65D30 \and 33B32}
\end{abstract}



\section{Introduction} \label{sec intro}
Let $C(x)$, $S(x)$, and $F(x)$ be the Fresnel integrals defined by
\begin{equation} \label{CSdef}
C(x) := \int_0^x \cos\left({\textstyle\frac{1}{2}}\pi t^2\right)\, \rd t, \quad S(x) := \int_0^x \sin\left({\textstyle\frac{1}{2}}\pi t^2\right)\, \rd t,
\end{equation}
and
\begin{align} \label{Fdef}
F(x) :=\frac{\re^{-\ri\pi/4}}{\sqrt{\pi}}\int^{\infty}_{x} \re^{\ri t^2} \,\rd t.
\end{align}
Our definitions in \eqref{CSdef} are those of \cite{AS} and \cite[\S7.2(iii)]{NIST}, and $F$, $C$ and $S$ are related through
\begin{equation} \label{inter}
\sqrt{2} \,\re^{\ri \pi/4} F(x) = {\textstyle \frac{1}{2}}-C\left(\sqrt{2/\pi}\, x\right) + \ri \left({\textstyle\frac{1}{2}}-S\left(\sqrt{2/\pi}\,x\right)\right).
\end{equation}

In this paper we derive new methods for computing these Fresnel integrals $F(x)$, $C(x)$ and $S(x)$. The derivation of our approximations makes use of the relationship between the Fresnel integral and the error function, that
\begin{equation} \label{Frelw}
F(x) = {\textstyle\frac{1}{2}} \mathrm{erfc}(\re^{-\ri\pi/4}x) = {\textstyle\frac{1}{2}} \, \re^{\ri x^2} \, w\left(\re^{\ri \pi/4}x\right)
\end{equation}
where $\mathrm{erfc}$ is the complementary error function, defined by
$$
\mathrm{erfc}(z) := \frac{2}{\sqrt{\pi}}\, \int_z^\infty \re^{-t^2} \rd t,
$$
and
$$
w(z) := \re^{-z^2} \mathrm{erfc}(-\ri z).
$$
It also depends on the integral representation \cite[(7.1.4)]{AS} that
\begin{equation} \label{wref}
w(z) = \frac{\ri}{\pi} \int_{-\infty}^\infty \frac{\re^{-t^2}}{z-t} \, \rd t = \frac{\ri z}{\pi} \int_{-\infty}^\infty  \frac{\re^{-t^2}}{z^2-t^2} \, \rd t, \; \mbox{ for } \mathrm{Im}(z)>0.
\end{equation}
Combining \eqref{Frelw} and \eqref{wref} gives an integral representation for $F(x)$, that
\begin{equation} \label{Fint}
F(x) = \frac{x}{2\pi}\, \re^{\ri(x^2+\pi/4)} \int_{-\infty}^\infty \frac{\re^{-t^2}}{x^2 + \ri t^2} \, \rd t, \; \mbox{ for } x >0.
\end{equation}

Fresnel integrals arise in applications throughout science and engineering, especially in problems of wave diffraction and scattering ({\em e.g.}, \cite[\S8.2]{BSU69}, \cite{CWHL12}), so that methods for the efficient and accurate computation of these functions are of wide application. The purpose of this paper is to present new approximations for the Fresnel integrals, based on $N$-point trapezium rule approximations to the integral representation \eqref{Fint} for $F(x)$, these trapezium rules modified to take into account the poles of the integrand. These poles lie near the path of integration when $x$ is small.

The observation that the trapezium rule is exponentially convergent when applied to integrals of the form
\begin{equation} \label{pc}
\int_{-\infty}^\infty \re^{-t^2} \,f(t)\, \rd t,
\end{equation}
with $f(t)$ analytic in a strip surrounding the real axis,
dates back at least to Turing \cite{Turing} and Goodwin \cite{Goodwin}. The derivation of this result uses contour integration and Cauchy's residue theorem; see \S\ref{sec approx F} below. Applying the trapezium rule with step-length $h>0$ to \eqref{Fint} leads to the approximation
\begin{equation} \label{Ftrap}
F(x) \approx\frac{xh}{\pi}\, \re^{\ri(x^2+\pi/4)}\,  \sum_{k=1}^\infty \frac{\re^{-\tau_k^2}}{x^2 + \ri \tau_k^2}, \; \mbox{ for } x >0,
\end{equation}
where
\begin{equation} \label{tauk}
\tau_k := (k-1/2)h.
\end{equation}
When  $x>0$ is large this approximation is very accurate. Indeed, if we choose
\begin{equation} \label{hchoice}
h= \sqrt{\pi/(N+1/2)}
\end{equation}
for some large integer $N$, then this approximation is essentially identical to the approximation $F_N(x)$ for $F(x)$ that we propose in \eqref{FNdef} below.
However, the approximation \eqref{Ftrap} becomes increasingly poor as $x>0$ approaches zero.

In the context of developing methods for evaluating the complementary error function of complex argument (by \eqref{Frelw}, evaluating $F(x)$ for $x$ real is just a special case of this larger problem), Chiarella and Reichel \cite{CR}, Matta and Reichel \cite{MR}, and Hunter and Regan \cite{HR} proposed modifications of the trapezium rule that follow naturally from the contour integration argument used to prove that the trapezium rule is exponentially convergent. The most appropriate form of this modification is that in \cite{HR} where the modified trapezium rule approximation
\begin{equation} \label{Ftrapm}
F(x) \approx \frac{xh}{\pi}\, \re^{\ri(x^2+\pi/4)}\,  \sum_{k=1}^\infty \frac{\re^{-\tau_k^2}}{x^2 + \ri \tau_k^2} + R(h,x), \; \mbox{ for } x >0,
\end{equation}
is proposed. Here the correction term $R(h,x)$ is defined by
$$
R(h,x) := \left\{\begin{array}{ll}
                   1/(\exp(2\pi \re^{-\ri\pi/4}x/h)+1), & \mbox{ if } 0<x < \sqrt{2}\,\pi/h, \\
                   0.5/(\exp(2\pi \re^{-\ri\pi/4}x/h)+1), & \mbox{ if } x = \sqrt{2}\,\pi /h, \\
                   0, &  \mbox{ if } x > \sqrt{2}\,\pi/h.
                 \end{array}\right.
$$
The approximation \eqref{Ftrapm} clearly coincides with $F_N(x)$, given by \eqref{FNdef}, for $0<x < \sqrt{2}\,\pi/h$, if the range of summation in \eqref{Ftrapm} is truncated to $1,...,N$ and the choice \eqref{hchoice} for $h$ is made. Hunter and Regan prove that the magnitude of the error in \eqref{Ftrapm} is
\begin{equation} \label{HRbound}
\leq \frac{x\re^{-\pi^2/h^2}}{\sqrt{\pi} \left(1- \re^{-2\pi^2/h^2}\right) \, \left|x^2/2-\pi^2/h^2\right|},
\end{equation}
for $x>0$, provided $x\neq \sqrt{2}\pi/h$. Similar estimates, it appears arrived at independently, are derived by Mori \cite{MM}, in which paper the emphasis is on computing $\mathrm{erfc}(x)$ for real $x$.

The approximation \eqref{Ftrapm} is the starting point for the method we propose in this paper. Our main contributions (see \S\ref{sec Results} for detail) are: (i) to point out that the approximation proposed in \eqref{Ftrapm} for $0<x<\sqrt{2}\, \pi/h$ in fact provides an accurate (and real-analytic) approximation to the entire function $F$ on the whole real line; (ii) to provide an optimal formula for the choice of the step-size $h$ as a function of $N$, the number of terms retained in the sum in \eqref{Ftrapm}; (iii) to prove that, with this choice of $h$, the resulting approximations are exponentially convergent as a function of $N$, uniformly on the real line (this in contrast to \eqref{HRbound} which blows up at $x=\sqrt{2}\,\pi/h$).

\subsection{Other methods for computing Fresnel integrals} \label{sec Other}
Naturally, there exist already a number of effective schemes for computation of Fresnel integrals, and we briefly summarise now the best of these. An effective computational method for smaller values of $|x|$ is to make use of the power series for $C(x)$ and $S(x)$ (see \eqref{CSpow} below).
These converge for all $x$, and very rapidly for smaller $x$, and so are widely used for computation. For example, the algorithm in the standard reference \cite{NR} uses these power series for $|x|\leq 1.5$. For this range, after the first two terms, these series are alternating series of monotonically decreasing terms, and the error in truncation has magnitude smaller than the first neglected term. Thus, for $| x|\leq 1.5$, the errors in computing $C(x)$ and $S(x)$ by these power series truncated to $N$ terms are $\leq 2\times 10^{-16}$ and $\leq 2.3\times 10^{-17}$, respectively, for $N=14$.

For$|x|> 1.5$, \cite{NR} recommends computation using the representations in terms of $\mathrm{erfc}$ which follow from \eqref{inter} and \eqref{Frelw}, and the continued fraction representation for $\re^{z^2}\mathrm{erfc}(z)=w(\ri z)$ given as \cite[(7.9.2)]{NIST}. Methods for evaluation of $w(z)$ based on continued fractions for larger complex $z$ (which can be used to evaluate $F(x)$ and hence $C(x)$ and $S(x)$) are also discussed in Gautschi \cite{Gautschi} and are finely tuned to form TOMS ``Algorithm 680'' in Poppe and Wijers \cite{PW1,PW2}. This algorithm achieves relative errors of $10^{-14}$ over ``nearly all'' the complex plane by Taylor expansions of degree up to 20 in an ellipse around the origin, convergents of up to order 20 of continued fractions outside a larger ellipse, and a more expensive mix of Taylor expansion and continued fraction calculations in between.

Weideman \cite{JA} presents an alternative method of computation (the derivation starts from the integral representation \eqref{wref}) which approximates $w(z)$, for $\mathrm{Im}(z)>0$, by the polynomial
\begin{equation} \label{weid}
w_M(z) = \frac{2}{L^2+z^2}\sum_{n=0}^M a_n Z^n
\end{equation}
in the transformed variable $Z = (L+\ri z)/(L-\ri z)$.
Here $L=\sqrt{M/\sqrt{2}}$ and the coefficients $a_n$ can be viewed as Fourier coefficients and efficiently computed by the FFT. We will see in \S\ref{sec NR} that a polynomial degree $M=36$ in \eqref{weid} suffices to compute $F(x)=\re^{\ri x^2} w(\re^{\ri\pi/4}x)/2$ with relative error $\leq 10^{-15}$ uniformly on the positive real axis. Weideman \cite{JA} argues carefully and persuasively that, for intermediate values of $|z|$ (values in approximately the range $1.5\leq |z| \leq 5$ for the case $\arg(z)=\pi/4$ which we require), and as measured by operation counts, the work required to compute $w(z)$ to $10^{-14}$ relative accuracy is much smaller for the approximation \eqref{weid} than for Algorithm 680 \cite{PW2}.

All the approximations described above are polynomial or rational approximations (or piecewise polynomial/rational approximations, proposing different approximations on different regions). Many other authors describe approximations of these types for computing the Fresnel integrals specifically with real arguments. The best of these in terms of accuracy is Cody \cite{Cody}, where numerical coefficient values are given for  piecewise rational approximations to $C(x)$ and $S(x)$ for $0\leq x\leq 1.6$, and for piecewise rational approximations to the related functions $f(x)$ and $g(x)$ (see \eqref{Crep} and \eqref{Srep} below), for $x\geq 1.6$. These approximations, in their respective regions of validity, achieve relative errors $\leq 10^{-15.58} \approx 2.7\times 10^{-16}$, this using rational approximations which are ratios of polynomials of degree $\leq 6$; in total five different approximations are used on different subintervals of the real axis. Single rational approximations, based on a ``polar'' version of \eqref{Crep} and \eqref{Srep}, are computed in \cite{Heald}, but these are of limited accuracy (absolute errors $\leq 4\times 10^{-8}$).

\subsection{Summary of the main results} \label{sec Results}
The main result of this paper is to derive, with rigorous error bounds, a new family of approximations to $F(x)$ based on modified trapezium rules, given by
\begin{eqnarray} \label{FNdef}
F_{N}(x) & :=&	\frac{1}{2} + \frac{\ri}{2} \tan\left(A_Nx\re^{\ri\pi/4}\right) + \frac{x}{A_N}\,\re^{\ri(x^2+\pi/4)} \,\sum_{k=1}^N \frac{\re^{-t_{k}^{2}}}{ x^2+\ri t_{k}^{2}}\\ \label{FNdef2}
& = & \frac{1}{\exp\left(2A_N x\re^{-\ri\pi/4}\right)+1} + \frac{x}{A_N}\,\re^{\ri(x^2+\pi/4)} \,\sum_{k=1}^N \frac{\re^{-t_{k}^{2}}}{ x^2+\ri t_{k}^{2}},
\end{eqnarray}
where
\begin{equation} \label{tkdef}
t_{k}:=\frac{\left(k-1/2\right)\pi}{\sqrt{\left(N+1/2\right)\pi}}, \quad A_N := t_{N+1} = \sqrt{(N+1/2)\pi}.
\end{equation}
The corresponding approximations to $C(x)$ and $S(x)$ that we propose (obtained by substituting in \eqref{inter} and separating real and imaginary parts) are
\begin{eqnarray} \nonumber
C_{N}(x)&:=&	\frac{1}{2}\, \frac{\sinh{(\sqrt{\pi}\,A_N\,x)}+\sin{(\sqrt{\pi}\,A_N\,x)}}{\cos(\sqrt{\pi}\,A_N\,x)+\cosh(\sqrt{\pi}\,A_N\,x)}\\ \label{CNdef}
& & \quad + \frac{\sqrt{\pi}\, x}{A_N}\left( a_N\left(\frac{\pi}{2}x^2\right)\sin\left(\frac{\pi}{2}x^2\right)- b_N\left(\frac{\pi}{2}x^2\right)\cos\left(\frac{\pi}{2}x^2\right) \right)
\end{eqnarray}
and
\begin{eqnarray} \nonumber
S_{N}(x)&:=&	\frac{1}{2}\, \frac{\sinh{(\sqrt{\pi}\,A_N\,x)}-\sin{(\sqrt{\pi}\,A_N\,x)}}{\cos(\sqrt{\pi}\,A_N\,x)+\cosh(\sqrt{\pi}\,A_N\,x)}\\ \label{SNdef}
& & \quad - \frac{\sqrt{\pi}\, x}{A_N}\left( a_N\left(\frac{\pi}{2}x^2\right)\cos\left(\frac{\pi}{2}x^2\right)+ b_N\left(\frac{\pi}{2}x^2\right)\sin\left(\frac{\pi}{2}x^2\right) \right),
\end{eqnarray}
where
\begin{equation} \label{akdef}
	a_{N}(s) := s\sum_{k=1}^N \frac{\re^{-t_{k}^{2}}}{s^2+t_{k}^{4}}, \quad b_{N}(s) := \sum_{k=1}^N \frac{t_k^2\,\re^{-t_{k}^{2}}}{s^2+t_{k}^{4}}.
\end{equation}

These approximations, designed for computation of $F(x)$, $C(x)$ and $S(x)$ for all $x\in \R$, are attractive in several respects.
\begin{itemize}
\item[$\bullet$] The approximation $F_N$ is proven in Theorems \ref{thm:main_abs_bound} and \ref{thm:rel} to converge to $F$ approximately in proportion to $\exp(-\pi N)$, uniformly on the real line with respect to both absolute and relative error, and this predicted rate of exponential convergence is observed in numerical experiments (see \S\ref{sec NR}).
\item[$\bullet$] The approximations $F_N(z)$, $C_N(z)$ and $S_N(z)$ to the entire functions $F$, $C$, and $S$, are analytic in the strip $|\mathrm{Im}(z)|<\sqrt{(N+1/2)\pi/2}$  and the error bounds we prove extend in modified form into this strip. This implies exponentially convergent error estimates, presented in \S\ref{subsec:complex} and \S\ref{sec approx CS}, for the difference between the coefficients in the Maclaurin series of $F$, $C$, and $S$ and those in the corresponding series for $F_N$, $C_N$ and $S_N$.
In turn (see \S\ref{sec approx CS}), this implies that the approximations all retain small relative error for $|x|$ small, and the computations in \S\ref{sec NR} demonstrate this.
\item[$\bullet$] These approximations inherit symmetries of the Fresnel integrals. In particular, our normalisation of $F(x)$ is such that
\begin{equation} \label{symm}
F(-x) = 1- F(x),
\end{equation}
so that, in particular, $F(0)=1/2$. It is clear from \eqref{FNdef} that the same holds for $F_N(x)$, {\em i.e.},
\begin{equation} \label{symmN}
F_N(-x) = 1- F_N(x).
\end{equation}
Similarly, where an overline denotes a complex conjugate,
\begin{equation} \label{conj}
\overline{F(z)} = F(\ri\bar z) \mbox{ and }  \overline{F_N(z)} = F_N(\ri\bar z).
\end{equation}
Both these symmetries can be deduced from the structure of $C$ and $S$ and their approximations: by inspection of \eqref{CNdef} and \eqref{SNdef} we see that
\begin{equation} \label{CSNstr}
C_N(x) = x f_C(x^4), \quad S_N(x) = x^3 f_S(x^4),
\end{equation}
where $f_C$ and $f_S$ are analytic in a neighbourhood of the real line and are real-valued for real arguments. This is the same structure as $C$ and $S$ (see \eqref{CSpow}). In particular, \eqref{CSNstr} implies that $C_N$ and $S_N$, like $C$ and $S$, are odd functions.
\item[$\bullet$] These approximations are straightforward to code. Tables \ref{matlab_code} and \ref{matlab_codeCS} show the short Matlab codes used to evaluate $F_N$, $C_N$ and $S_N$ for all the computations in this paper. 
\end{itemize}

\begin{table}
\small
\begin{verbatim}
function f = fresnel(x,N)
% Evaluates the approximation F_N(x) to the Fresnel integral F(x).
% x is a real scalar or matrix,
% N is the positive integer controlling accuracy (suggest N=12),
% f is the corresponding scalar or matrix of values of F_N(x).
select = x>=0;
f = zeros(size(x));
if any(select), f(select) = F(x(select),N); end
if any(~select), f(~select) = 1-F(-x(~select),N); end

function f = F(x,N)
h = sqrt(pi/(N+0.5));
t = h*((N:-1:1)-0.5);  AN = pi/h;
t2 = t.*t; t4 = t2.*t2; et2 = exp(-t2);
rooti = exp(i*pi/4);
z = rooti*x; x2 = x.*x; x4 = x2.*x2; z2 = i*x2;
S = (-et2(1)./(x4+t4(1))).*(z2+t2(1));
for n = 2:N
    S = S + (-et2(n)./(x4+t4(n))).*(z2+t2(n));
end
ez = exp((2*AN*i*rooti)*x);
f =  (i/AN)*z.*exp(z2).*S + ez./(ez+1);

\end{verbatim}
\normalsize
\caption{Matlab code to evaluate $F_N(x)$ given by \eqref{FNdef2}, making use of \eqref{symmN} for $x<0$.}
\label{matlab_code}
\end{table}

We end this introduction by outlining the remainder of the paper. In \S\ref{sec approx F} we derive the approximation \eqref{FNdef} to $F(x)$ and prove rigorous bounds on $|F(x)-F_N(x)|$. In \S\ref{sec approx CS} we deduce from this the approximations \eqref{CNdef} and \eqref{SNdef} and bounds on the errors $C(x)-C_N(x)$ and $S(x)-S_N(x)$, especially bounds for $x$ small. In \S\ref{sec NR} we show numerical results, comparing our new approximations with the error bounds derived in the earlier sections and with certain rival methods for computing Fresnel integrals. The appendix proves what appears to be a new, sharp lower bound on $|\erfc(z)|$, for $\mathrm{Re}(z)\geq 0$, of some independent interest, potentially useful for deriving rigorous upper bounds on the relative error in approximate methods for computing $\erfc$ (e.g., the methods of \cite{HR,PW1,JA}). The relevance of this lower bound to the rest of the paper is that it implies, via \eqref{Frelw}, a new lower bound on $|F(x)|$ for $x>0$, of independent interest and a key component in our theoretical bounds on relative errors in \S\ref{sec approx F}.

\section{The Approximation for $F(x)$ and its Error Bounds} \label{sec approx F}
In this section we derive the approximation $F_N(x)$ to $F(x)$ and derive error bounds for this approximation demonstrating that both absolute and relative errors converge exponentially to zero as $N$ increases, uniformly on the real line, and that $N=12$ is enough to achieve errors $< 10^{-15}$.
The first part of our derivation follows in large part Matta and Reichel \cite{MR} and Hunter and Regan \cite{HR}. From \eqref{Fint} we have that, for $x>0$,
\begin{equation} \label{FinI}
I := \int_{-\infty}^\infty f(t) \, \rd t = F(x), \mbox{ where } f(t) := \re^{\ri(x^2+\pi/4)}\,\frac{x}{2\pi}\, \frac{\re^{-t^2}}{x^2 + \ri t^2},
\end{equation}
and we have suppressed in our notation the dependence of $f(t)$ on $x$.

Given $h>0$ let
$$
g(z) = \ri \tan(\pi z/h),
$$
which is an odd meromorphic function with simple poles at the points $\tau_k$, defined by \eqref{tauk}, which has the property that, for $z=X+\ri H$ with $X\in\R$, $H>0$,
\begin{equation} \label{gbound}
|1+g(z)| \leq \frac{2\re^{-2\pi H/h}}{1- \re^{-2\pi H/h}}.
\end{equation}

The approximation \eqref{Ftrapm} is obtained by considering the integral in the complex plane
\begin{align}\label{J}
J = \int_{\Gamma} f(z)(1+g(z))\,\rd z,
\end{align}
where the path of integration is from $-\infty$ to $\infty$ along the real axis, except that the path makes small semicircular deformations to pass above each of the simple poles at the points $\tau_k$, $k\in \Z$. Explicitly, the $k$th deformation is the semicircle $\gamma_k = \{\tau_k + \epsilon \re^{-\ri\theta}:\pi\leq \theta \leq 2\pi\}$, with $\epsilon$ in the range $(0,h/2)$ small enough so that the simple pole singularity in $f(z)$ at $z=z_0:=\re^{\ri\pi/4} x$ lies above $\Gamma$. Then, since $f(z)g(z)$ is an odd function, we see that
$$
J = \int_{\Gamma} f(z)\,\rd z + \int_{\Gamma} f(z)g(z)\,\rd z = I + \sum_{k\in \Z} \int_{\gamma_k} f(z)g(z)\,\rd z.
$$
In the limit $\epsilon\to 0$, $\int_{\gamma_k} f(z)g(z)\, \rd z \to -\pi \ri\, \mathrm{Res}(fg,\tau_k) = -h f(\tau_k)$, where $\mathrm{Res}(fg,\tau_k)$ denotes the residue of $fg$ at $\tau_k$. Thus $J = I-I_h$, where
\begin{equation} \label{Ih}
I_h = h\sum_{k\in\Z} f(\tau_k) = 2h\sum_{k=1}^\infty f((k-1/2)h)
\end{equation}
is a trapezium/midpoint rule approximation to $I$.

For $H>0$ let
$$
J_H = \int_{\Gamma_H} f(z)(1+g(z))\,\rd z,
$$
where the path of integration $\Gamma_H$ is the line $\mathrm{Im}(z)=H$, traversed in the direction of increasing $\mathrm{Re}(z)$. It follows from Cauchy's residue theorem that
\begin{equation}\label{prop1}
J-J_H = \mathbf{H}\left(\sqrt{2}\,H-x\right)\, PC_h,
\end{equation}
where $\mathbf{H}$ is the Heaviside step function (defined by $\mathbf{H}(t) = 1$, for $t>0$, $\mathbf{H}(0)=1/2$, and $\mathbf{H}(t)=0$, for $t<0$), and
$$
PC_h = 2\pi \ri \,\mathrm{Res}(f(1+g), z_0) = \frac{1}{2}\,  \left(1+ g(z_0)\right) = \frac{1}{2}\,  \left(1+ \ri \tan\left(\re^{\ri\pi/4}x\pi/h\right)\right).
$$
Thus
\begin{equation} \label{main}
I= I_h + \mathbf{H}\left(\sqrt{2}\,H-x\right)\, PC_h + J_H.
\end{equation}
The point of this formula is that $I_h + \mathbf{H}\left(\sqrt{2}\,H-x\right)\, PC_h$ is a computable approximation to $I$ and the integral $J_H$ is small, as quantified in the following proposition.
\begin{proposition}
Let $e_h$ denote the value of the integral $J_H$ when we choose $H=\pi/h$. Then, for $x>0$,
\begin{equation} \label{main2}
|e_h| \leq \delta_1(x) := \frac{x\,\,\re^{-\pi^2/h^2}}{\sqrt{\pi}\,|\pi^2/h^2-x^2/2|\, \left(1- \re^{-2\pi^2 /h^2}\right)}.
\end{equation}
\end{proposition}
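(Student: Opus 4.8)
The plan is to parametrise the contour $\Gamma_H$ (with $H=\pi/h$) by $z=t+\ri H$, $t\in\R$, so that $e_h=\int_{-\infty}^\infty f(t+\ri H)\,(1+g(t+\ri H))\,\rd t$, and then estimate by the product of the supremum of $|1+g|$ and the $L^1$-size of $|f|$ along the line: $|e_h|\le \big(\sup_{t}|1+g(t+\ri H)|\big)\int_{-\infty}^\infty |f(t+\ri H)|\,\rd t$. For the first factor I would invoke \eqref{gbound} directly with $H=\pi/h$, which gives $|1+g(t+\ri H)|\le 2\re^{-2\pi^2/h^2}/(1-\re^{-2\pi^2/h^2})$ uniformly in $t$. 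For the second factor I note $|\re^{\ri(x^2+\pi/4)}|=1$ and, since $z^2=t^2-H^2+2\ri tH$, that $|\re^{-z^2}|=\re^{H^2-t^2}$; hence $|f(t+\ri H)|=\tfrac{x}{2\pi}\,\re^{H^2-t^2}/|x^2+\ri z^2|$, and it remains only to bound the denominator from below, uniformly in $t$.

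The crux, and the step I expect to be the main obstacle, is exactly this uniform lower bound on $|x^2+\ri z^2|$ along the line $\mathrm{Im}(z)=H$. I would handle it by factorising: since $z_0^2=\re^{\ri\pi/2}x^2=\ri x^2$, we have $x^2+\ri z^2=\ri(z^2-\ri x^2)=\ri(z-z_0)(z+z_0)$, so $|x^2+\ri z^2|=|z-z_0|\,|z+z_0|$. Because the modulus of a complex number is at least the absolute value of its imaginary part, and $\mathrm{Im}(z_0)=x/\sqrt{2}$, $\mathrm{Im}(z)=H$, each factor is controlled independently of $t$: $|z-z_0|\ge|H-x/\sqrt{2}|$ and $|z+z_0|\ge H+x/\sqrt{2}$. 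Multiplying these yields $|x^2+\ri z^2|\ge |H-x/\sqrt{2}|\,(H+x/\sqrt{2})=|H^2-x^2/2|$, a bound free of $t$. It is worth recording that this lower bound degenerates to $0$ precisely when $x=\sqrt{2}\,H=\sqrt{2}\,\pi/h$, which is exactly the value at which $\delta_1(x)$ is infinite, so the estimate remains consistent (and vacuous) there.

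Finally I would assemble the pieces. Using $\int_{-\infty}^\infty \re^{-t^2}\,\rd t=\sqrt{\pi}$ together with the lower bound just obtained gives $\int_{-\infty}^\infty |f(t+\ri H)|\,\rd t\le \tfrac{x}{2\pi}\,\re^{H^2}\,|H^2-x^2/2|^{-1}\int_{-\infty}^\infty \re^{-t^2}\,\rd t=\tfrac{x\sqrt{\pi}\,\re^{H^2}}{2\pi\,|H^2-x^2/2|}$. Multiplying by the bound on $\sup_t|1+g|$ and using $H^2=\pi^2/h^2$, so that $\re^{H^2}\re^{-2\pi^2/h^2}=\re^{-\pi^2/h^2}$ and $|H^2-x^2/2|=|\pi^2/h^2-x^2/2|$, collapses the constants to $\tfrac{x\,\re^{-\pi^2/h^2}}{\sqrt{\pi}\,|\pi^2/h^2-x^2/2|\,(1-\re^{-2\pi^2/h^2})}=\delta_1(x)$, as required. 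The only genuinely nontrivial ingredient is the factorisation lower bound on $|x^2+\ri z^2|$; the remaining steps are the routine $\sup\times L^1$ estimate and bookkeeping of the exponential factors.
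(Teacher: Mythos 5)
Your proof is correct and follows essentially the same route as the paper: the key lower bound $|x^2+\ri z^2|=|z-z_0|\,|z+z_0|\ge|H-x/\sqrt2|\,(H+x/\sqrt2)=|H^2-x^2/2|$ is exactly the paper's estimate, combined with \eqref{gbound} and $\int_{-\infty}^\infty\re^{-t^2}\,\rd t=\sqrt{\pi}$ in the same way. The only difference is that you spell out the justification via imaginary parts and the degeneracy at $x=\sqrt2\,\pi/h$, which the paper leaves implicit.
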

\begin{proof}
For $z=X+\ri H$,
$$
|x^2+\ri z^2|=|z_0-z|\, |z_0+z| \geq |x/\sqrt{2}-H|\, |x/\sqrt{2}+H| = |x^2/2-H^2|
$$
so, using \eqref{gbound} and recalling that $\int_{-\infty}^\infty \re^{-t^2}\, dt = \sqrt{\pi}$ , we see that
\begin{equation} \nonumber
\left| J_H\right| \leq \frac{x\,\re^{H^2-2\pi H/h}}{\sqrt{\pi}\, |H^2-x^2/2|\, \left(1- \re^{-2\pi H/h}\right)}.
\end{equation}
Choosing $H=\pi/h$, to minimise the exponent $H^2-2\pi H/h$, the result \eqref{main2} follows. \qed
\end{proof}

Note that $I_h + \mathbf{H}\left(\sqrt{2}\,\pi/h-x\right)\, PC_h = I_h + R(h,x)$ is precisely the approximation \eqref{Ftrapm}, and that the above bound on $e_h$ is precisely the bound \eqref{HRbound} from \cite{HR}.

\begin{theorem} \label{thm:Delta_h}
Let $I_h^* := I_h + PC_h$ and $e_h^* := I-I_h^*$. Then, for $x>0$,
\begin{equation}
|e_h^*| \leq \Delta_h(x),
\end{equation}
where
\begin{equation} \label{Deldef}
\Delta_h(x) := \left\{\begin{array}{cc}
                      \delta_1(x), & 0\leq \frac{x}{\sqrt{2}} \leq \frac{3}{4}\frac{\pi}{h}, \\
                       \delta_2(x), & \frac{3}{4}\frac{\pi}{h}< \frac{x}{\sqrt{2}} < \frac{5}{4}\frac{\pi}{h}, \\
                        \delta_3(x), & \frac{x}{\sqrt{2}} \geq \frac{5}{4}\frac{\pi}{h}.
                    \end{array}\right.
\end{equation}
Here $\delta_1$ is defined by \eqref{main2},
\begin{equation} \label{b4}
\delta_2(x) := \frac{4hx\,\,\re^{-\pi^2/h^2}}{\sqrt{\pi}\,\pi(\pi/h+x/\sqrt{2})\, \left(1- \re^{-2\pi^2 /h^2}\right)}\left(1 + 2\sqrt{\pi}\,\re^{-\beta\pi^2/h^2}\right),
\end{equation}
with $\beta = 1- \sqrt{2}/2-(2\sqrt{2}+1)/16 \approx 0.0536$, and
\begin{equation} \label{b2}
\delta_3(x) := \delta_1(x) + \frac{\re^{-\sqrt{2}\,\pi x/h}}{1- \re^{-\sqrt{2}\,\pi x/h}}.
\end{equation}
\end{theorem}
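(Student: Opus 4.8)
The plan is to reduce everything to the master identity \eqref{main}. Since \eqref{main} is valid for \emph{any} admissible contour height $H>0$, subtracting $I_h^{*}=I_h+PC_h$ gives, for every such $H$,
\[
e_h^{*} \;=\; \bigl(\mathbf{H}(\sqrt{2}\,H-x)-1\bigr)\,PC_h \;+\; J_H .
\]
The three rows of \eqref{Deldef} are governed by the position of the simple pole $z_0=\re^{\ri\pi/4}x$, whose height is $\mathrm{Im}(z_0)=x/\sqrt2$, relative to the optimal height $H=\pi/h$ used in the Proposition. Indeed, the offending factor $|\pi^2/h^2-x^2/2|$ in $\delta_1$ factorises as $|\pi/h-x/\sqrt2|\,(\pi/h+x/\sqrt2)$, so the Proposition's bound is useful precisely away from the resonance $x/\sqrt2=\pi/h$, and the band $\tfrac34\tfrac{\pi}{h}<\tfrac{x}{\sqrt2}<\tfrac54\tfrac{\pi}{h}$ is exactly where it degenerates.

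In the first region, $x/\sqrt2\le\tfrac34\,\pi/h$, I would take $H=\pi/h$. Then $\sqrt2\,H-x=\sqrt2\,\pi/h-x>0$, so $\mathbf H=1$ and $e_h^{*}=J_{\pi/h}=e_h$; the Proposition gives at once $|e_h^{*}|\le\delta_1(x)$. In the third region, $x/\sqrt2\ge\tfrac54\,\pi/h$, I take $H=\pi/h$ again, but now $\mathbf H=0$, so $e_h^{*}=J_{\pi/h}-PC_h=e_h-PC_h$ and $|e_h^{*}|\le|e_h|+|PC_h|\le\delta_1(x)+|PC_h|$. It then remains only to bound $|PC_h|$: using the identity behind the passage from \eqref{FNdef} to \eqref{FNdef2}, $PC_h=1/(\exp(2\pi x\,\re^{-\ri\pi/4}/h)+1)$, and since $\mathrm{Re}(2\pi x\,\re^{-\ri\pi/4}/h)=\sqrt2\,\pi x/h>0$ we get
\[
|PC_h|\;\le\;\frac{1}{\re^{\sqrt2\,\pi x/h}-1}\;=\;\frac{\re^{-\sqrt2\,\pi x/h}}{1-\re^{-\sqrt2\,\pi x/h}},
\]
which combined with the previous inequality is exactly $\delta_3(x)$ from \eqref{b2}.

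The middle region is the main obstacle, precisely because there $x/\sqrt2$ can equal $\pi/h$: the pole $z_0$ then lies \emph{on} the straight line $\Gamma_{\pi/h}$ (and within $\tfrac14\,\pi/h$ of it throughout the band), so the crude estimate $|z_0-z|\ge|x/\sqrt2-H|$ of the Proposition collapses and $J_{\pi/h}$ is not even defined at resonance. My plan is to keep the optimal height $H=\pi/h$ — so as to retain the leading factor $\re^{-\pi^2/h^2}$ — but to replace the straight line by a contour $\Gamma'$ that coincides with $\Gamma_{\pi/h}$ away from $\mathrm{Re}(z)=x/\sqrt2$ and makes an upward circular detour of radius $\rho=\pi/(4h)$ around $z_0$, routed so that $z_0$ lies \emph{below} $\Gamma'$ throughout the band. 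The choice $\rho=\pi/(4h)$ (the half-width of the band) is exactly what is needed: since $|x/\sqrt2-\pi/h|<\pi/(4h)=\rho$, the circle of radius $\rho$ about $z_0$ does meet the line $\mathrm{Im}(z)=\pi/h$, so $\Gamma'$ is well defined and stays uniformly at distance $\ge\rho$ from $z_0$. The same residue computation as in \eqref{prop1}, now with $z_0$ enclosed between the real axis and $\Gamma'$, yields $e_h^{*}=\int_{\Gamma'}f(z)(1+g(z))\,\rd z$ with no leftover $PC_h$ term, consistent with the absence of any $PC_h$-type contribution in \eqref{b4}. On the straight portions of $\Gamma'$ one has $|x^2+\ri z^2|=|z_0-z|\,|z_0+z|\ge\rho\,(x/\sqrt2+\pi/h)$, so the estimate of the Proposition goes through with the factor $|x^2/2-\pi^2/h^2|^{-1}$ replaced by $\rho^{-1}(\pi/h+x/\sqrt2)^{-1}=(4h/\pi)(\pi/h+x/\sqrt2)^{-1}$; together with \eqref{gbound} at $H=\pi/h$ this reproduces precisely the leading factor $\tfrac{4hx\,\re^{-\pi^2/h^2}}{\sqrt\pi\,\pi(\pi/h+x/\sqrt2)(1-\re^{-2\pi^2/h^2})}$ of $\delta_2$.

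The remaining work — the routine but genuinely fiddly part, and the step I expect to be the real obstacle — is to estimate the contribution of the detour arc and thereby produce the correction factor $\bigl(1+2\sqrt\pi\,\re^{-\beta\pi^2/h^2}\bigr)$ with the explicit constant $\beta=1-\sqrt2/2-(2\sqrt2+1)/16$. Here $|\re^{-z^2}|=\re^{-\mathrm{Re}(z^2)}$ and, crucially, $z_0^2=\ri x^2$ is purely imaginary, so $|\re^{-z_0^2}|=1$ and the Gaussian offers no damping at the pole itself; the delicate point is to bound $\mathrm{Re}(z^2)$ from below and $\mathrm{Im}(z)$ from above along the arc, uniformly over the band $x/\sqrt2\in(\tfrac34,\tfrac54)\pi/h$, so that the combination $|\re^{-z^2}|\,|1+g(z)|$ stays controlled relative to the straight-part estimate. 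Parametrising $z=z_0+\rho\re^{\ri\phi}$ and optimising the resulting exponent over the admissible range of $\phi$ (the worst case occurring at the band edge) is what pins down the value of $\beta$ and the constant $2\sqrt\pi$; the factor $\sqrt\pi$ itself reflects the Gaussian integral $\int_{-\infty}^\infty\re^{-t^2}\,\rd t=\sqrt\pi$ already invoked in the Proposition. Combining the straight and arc estimates gives $|e_h^{*}|\le\delta_2(x)$ on the middle band, which completes the three cases; the two outer regions, by contrast, are essentially immediate consequences of the Proposition together with the elementary bound on $|PC_h|$ above.
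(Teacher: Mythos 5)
Your proposal is correct and follows essentially the same route as the paper: identical treatment of the two outer regions via the Proposition and the elementary bound on $|PC_h|$, and in the middle band the same contour $\Gamma_H^*$ at height $H=\pi/h$ with a semicircular detour of radius $\epsilon=\alpha H$, $\alpha=1/4$, split into straight parts (distance $\geq\epsilon$ from $z_0$) and the arc. The one step you leave unexecuted — bounding the arc contribution — is exactly the paper's short computation $Y^2-X^2=2x\epsilon\sin(\theta-\pi/4)-\epsilon^2\cos(2\theta)<2\sqrt{2}H\epsilon+(2\sqrt{2}+1)\epsilon^2$ together with $\mathrm{Im}(z)\geq H$ on the arc (a lower bound, needed for \eqref{gbound}, rather than the upper bound you mention), which yields $\beta=1-2\sqrt{2}\alpha-(2\sqrt{2}+1)\alpha^2$ and the factor $2\sqrt{\pi}$ as the ratio of the arc-length estimate to the Gaussian-integral estimate on the straight parts.
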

\begin{proof}
The bound \eqref{main2} implies that $
|e_h^*| \leq \delta_1(x)$, for $0<x<\sqrt{2}\,\pi/h$.
Since, applying \eqref{gbound},
$$
|PC_h| \leq \frac{\re^{-\sqrt{2}\,\pi x/h}}{1- \re^{-\sqrt{2}\,\pi x/h}},
$$
the bound \eqref{main2} also implies that $|e_h^*| \leq \delta_3(x)$, for $x>\sqrt{2}\,\pi/h$.

Setting $H=\pi/h$, select $\epsilon$ in the range $(0,H)$ and consider the case that
$\left|x/\sqrt{2}-H\right| < \epsilon$.
In this case we observe that the derivation of \eqref{main} can be modified to show that
\begin{equation} \label{ehs}
e_h^* = \int_{\Gamma^*_H} f(z)(1+g(z))\,\rd z,
\end{equation}
where the contour $\Gamma^*_H$ passes above the pole in $f$ at $z_0$; precisely, $\Gamma_H^*$ is the union of $\Gamma^\prime$ and $\gamma$, where $\Gamma^\prime = \{z\in \Gamma_H: |z-z_0|>\epsilon\}$ and $\gamma$ is the circular arc $\gamma = \{z_0+\epsilon \re^{\ri\theta}:\theta_0\leq \theta \leq \pi-\theta_0\}$, where $\theta_0 = \sin^{-1}((H-x/\sqrt{2})/\epsilon)\in (-\pi/2,\pi/2)$. For $z\in \Gamma^\prime$ it holds that
\begin{equation} \label{b3}
|x^2+\ri z^2|=|z_0-z|\, |z_0+z| \geq \epsilon\, |x/\sqrt{2}+H|.
\end{equation}
Thus, and applying \eqref{gbound}, similarly to \eqref{main2} we deduce that
\begin{equation} \label{bound1}
\left|\int_{\Gamma^\prime} f(z)(1+g(z))\,\rd z\right| \leq \frac{x\,\,\re^{-\pi^2/h^2}}{\sqrt{\pi}\,\epsilon|\pi/h+x/\sqrt{2}|\, \left(1- \re^{-2\pi^2 /h^2}\right)}.
\end{equation}
To bound the integral over $\gamma$ we note that, for $z=X+\ri Y = z_0+\epsilon \re^{\ri\theta}\in \gamma$, \eqref{b3} is true and $Y\geq H$. Further,
$| \re^{-z^2}| = \re^P$, where
$$
P = Y^2-X^2 = 2x\epsilon\sin(\theta-\pi/4) - \epsilon^2\cos(2\theta)< 2x\epsilon+ \epsilon^2 \leq 2\sqrt{2}H\epsilon + (2\sqrt{2}+1)\epsilon^2,
$$
since $\left|x/\sqrt{2}-H\right| < \epsilon$. From these bounds and \eqref{gbound}, defining $\alpha =\epsilon/H\in (0,1)$, we deduce that
\begin{equation} \label{bound2a}
\left|\int_{\gamma} f(z)(1+g(z))\,\rd z\right| \leq \frac{2x\,\exp((2\sqrt{2}\alpha+(2\sqrt{2}+1)\alpha^2-2)\pi^2/h^2)}{\epsilon|\pi/h+x/\sqrt{2}|\, \left(1- \re^{-2\pi^2 /h^2}\right)}.
\end{equation}
For $x$ in the range $\left|x/\sqrt{2}-H\right| < \epsilon$ we can bound $e_h^*$ using \eqref{ehs}, \eqref{bound1}, \eqref{bound2a}, and the triangle inequality, to get that
\begin{equation} \label{b4a}
|e_h^*|\leq \frac{hx\,\,\re^{-\pi^2/h^2}}{\alpha \sqrt{\pi}\,\pi|\pi/h+x/\sqrt{2}|\, \left(1- \re^{-2\pi^2 /h^2}\right)}\left(1 + 2\sqrt{\pi}\,\re^{-\beta\pi^2/h^2}\right),
\end{equation}
where $\beta = 1-2\sqrt{2}\alpha-(2\sqrt{2}+1)\alpha^2$. Noting that $\beta >0$ if and only if $0<\alpha<\alpha_0$, where $\alpha_0 = (1+2\sqrt{2})^{-1}\approx 0.2612$, we choose $\alpha<\alpha_0$ to be $\alpha=1/4$. With this choice it follows from \eqref{b4a} that $|e_h^*|\leq \delta_2(x)$ for $\frac{3}{4}\frac{\pi}{h}< \frac{x}{\sqrt{2}} < \frac{5}{4}\frac{\pi}{h}$, and the proof is complete. \qed
\end{proof}

The approximation $F_N(x)$, given by \eqref{FNdef}, that we propose for $I=F(x)$ is just $I_h^*=I_h+PC_h$ with a particular choice of $h$ and with the range of summation in \eqref{Ih} reduced to the finite range $1,...,N$. This induces an additional error,
\begin{equation} \label{TNdef}
T_N := 2h\sum_{m=N+1}^\infty f(\tau_m),
\end{equation}
that we bound in the next proposition.
\begin{proposition} \label{prop:TN}
For $x>0$,
\begin{eqnarray*}
|T_N|\leq\frac{(2h\tau_{N+1}+ 1)x}{2\pi \tau_{N+1}\sqrt{x^4+\tau_{N+1}^4}} \, \re^{-\tau_{N+1}^2}.
\end{eqnarray*}
\end{proposition}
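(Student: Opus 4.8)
The plan is to estimate $|T_N|$ termwise via the triangle inequality and then dominate the resulting Gaussian tail by a geometric series. First I would read off from \eqref{FinI} that
\[
|f(\tau_m)| = \frac{x}{2\pi}\,\frac{\re^{-\tau_m^2}}{|x^2+\ri\tau_m^2|} = \frac{x}{2\pi}\,\frac{\re^{-\tau_m^2}}{\sqrt{x^4+\tau_m^4}},
\]
so that, recalling $T_N$ from \eqref{TNdef},
\[
|T_N| \leq 2h\sum_{m=N+1}^\infty |f(\tau_m)| = \frac{hx}{\pi}\sum_{m=N+1}^\infty \frac{\re^{-\tau_m^2}}{\sqrt{x^4+\tau_m^4}}.
\]
Since $\tau_m\geq\tau_{N+1}$ for every $m\geq N+1$, the denominator factor $\sqrt{x^4+\tau_m^4}$ is minimised at $m=N+1$, so I would replace it by $\sqrt{x^4+\tau_{N+1}^4}$ and pull it out of the sum, reducing the task to bounding the pure Gaussian tail $\sum_{m=N+1}^\infty \re^{-\tau_m^2}$.

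For that tail I would exploit the arithmetic progression inherent in \eqref{tauk}, writing $\tau_{N+1+j} = \tau_{N+1}+jh$ for $j\geq 0$ and expanding
\[
\tau_{N+1+j}^2 = \tau_{N+1}^2 + 2jh\tau_{N+1} + j^2h^2 \geq \tau_{N+1}^2 + 2jh\tau_{N+1},
\]
discarding the nonnegative quadratic term $j^2h^2$. This majorises the tail by a geometric series:
\[
\sum_{m=N+1}^\infty \re^{-\tau_m^2} \leq \re^{-\tau_{N+1}^2}\sum_{j=0}^\infty \re^{-2jh\tau_{N+1}} = \frac{\re^{-\tau_{N+1}^2}}{1-\re^{-2h\tau_{N+1}}}.
\]
Combining the two estimates yields the intermediate bound
\[
|T_N| \leq \frac{hx\,\re^{-\tau_{N+1}^2}}{\pi\sqrt{x^4+\tau_{N+1}^4}\,\bigl(1-\re^{-2h\tau_{N+1}}\bigr)}.
\]

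The final, and only slightly delicate, step is to recast the geometric factor into the rational form of the claim. Setting $u := 2h\tau_{N+1}>0$, I would invoke the elementary inequality $\re^u\geq 1+u$, equivalently $(1+u)\re^{-u}\leq 1$, which rearranges to $1/(1-\re^{-u})\leq (u+1)/u$. Substituting $u=2h\tau_{N+1}$ back in gives
\[
|T_N| \leq \frac{hx\,\re^{-\tau_{N+1}^2}}{\pi\sqrt{x^4+\tau_{N+1}^4}}\cdot\frac{2h\tau_{N+1}+1}{2h\tau_{N+1}} = \frac{(2h\tau_{N+1}+1)\,x}{2\pi\tau_{N+1}\sqrt{x^4+\tau_{N+1}^4}}\,\re^{-\tau_{N+1}^2},
\]
which is exactly the asserted inequality. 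I anticipate no genuine obstacle in this argument; the one point that repays care is precisely this last manoeuvre, namely choosing to bound $1/(1-\re^{-2h\tau_{N+1}})$ by a rational expression through $\re^u\geq 1+u$ rather than leaving it in closed geometric form, since it is this choice that produces the clean $(2h\tau_{N+1}+1)$ numerator and keeps the behaviour of the bound transparent as $N\to\infty$.
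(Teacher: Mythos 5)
Your proof is correct, and it diverges from the paper's at the only point where there is real work to do: estimating the Gaussian tail $\sum_{m\geq N+1}\re^{-\tau_m^2}$. The opening is identical in both arguments (triangle inequality on \eqref{TNdef}, $|x^2+\ri\tau_m^2|=\sqrt{x^4+\tau_m^4}\geq\sqrt{x^4+\tau_{N+1}^4}$ pulled out of the sum). The paper then splits off the $m=N+1$ term, bounds $h\sum_{m\geq N+2}\re^{-\tau_m^2}$ by $\int_{\tau_{N+1}}^\infty\re^{-t^2}\,\rd t$ using the monotonicity of $\re^{-t^2}$ over the length-$h$ subintervals, and finishes with the integration-by-parts inequality \eqref{erfcb}, so that the residual tail is $\leq \re^{-\tau_{N+1}^2}/\tau_{N+1}$. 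You instead keep the whole sum, exploit the arithmetic progression $\tau_{N+1+j}=\tau_{N+1}+jh$ from \eqref{tauk} to dominate it by a geometric series, and convert $1/(1-\re^{-u})$ with $u=2h\tau_{N+1}$ into $(u+1)/u$ via $\re^u\geq 1+u$; unwinding your last step, this bounds the $j\geq 1$ part of the sum by exactly the same quantity $\re^{-\tau_{N+1}^2}/\tau_{N+1}$, which is why the two routes land on the identical final constant. Your version is slightly more self-contained (it needs only $\re^u\geq 1+u$, not the $\erfc$ tail bound \eqref{erfcb}, and only the arithmetic structure of the nodes rather than an integral comparison); the paper's version has the advantage that \eqref{erfcb} is stated and reused as a standard estimate elsewhere in the development. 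Either is acceptable; there is no gap in yours.
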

\begin{proof}
\begin{eqnarray*}
|T_N| & \leq &\frac{hx}{\pi} \sum_{m=N+1}^\infty \frac{\re^{-\tau_m^2}}{\sqrt{x^4+\tau_m^4}}\\
& \leq & \frac{x}{2\pi\sqrt{x^4+\tau_{N+1}^4}} \left(2h\re^{-\tau_{N+1}^2} + 2h\sum_{m={N+2}}^\infty \re^{-\tau_m^2}\right)\\
& \leq & \frac{x}{2\pi\sqrt{x^4+\tau_{N+1}^4}} \left(2h\re^{-\tau_{N+1}^2} + 2\int_{\tau_{N+1}}^\infty \re^{-t^2}\rd t\right)\\
& \leq & \frac{x}{2\pi\sqrt{x^4+\tau_{N+1}^4}} \left(2h\re^{-\tau_{N+1}^2} + \frac{\re^{-\tau_{N+1}^2}}{\tau_{N+1}}\right)= \frac{(2h\tau_{N+1}+ 1)x}{2\pi \tau_{N+1}\sqrt{x^4+\tau_{N+1}^4}} \, \re^{-\tau_{N+1}^2}.
\end{eqnarray*}
To arrive at the last line we have used that, for $x>0$,
\begin{equation} \label{erfcb}
2\int_x^\infty \re^{-t^2}\rd t = \frac{\re^{-x^2}}{x}-\int_x^\infty \frac{\re^{-t^2}}{t^2}\rd t< \frac{\re^{-x^2}}{x}.
\end{equation}\qed
\end{proof}

 At this point we make a choice of $h$ to approximately equalise $\Delta_h(x)$ in Theorem \ref{thm:Delta_h} (which is approximately proportional to $\exp(-\pi^2/h^2)$)  and the bound on $T_N$ in Proposition \ref{prop:TN}, choosing $h$ so that $\pi/h= \tau_{N+1}=(N+1/2)h$. In other words, we make the choice $h=\sqrt{\pi/(N+1/2)}$ given by \eqref{hchoice}, in which case $\tau_{N+1}=A_N=\sqrt{(N+1/2)\pi}$, and $\tau_k=t_k$, where $t_k$ is defined by \eqref{tkdef}.
Making this choice of $h$ we see that
\begin{equation} \label{ENdef}
E_N(x) := F(x)-F_N(x) = e_h^* + T_N
\end{equation}
and that
\begin{equation} \label{TNbound}
|T_N| \leq    \frac{(2\pi+1)x}{2\pi A_{N}\sqrt{x^4+A_{N}^4}}\, \re^{-A_{N}^2}.
\end{equation}
Combining \eqref{ENdef} and \eqref{TNbound} with Theorem \ref{thm:Delta_h}, we arrive at the following theorem which is our main pointwise error bound. Theorem \ref{thm:Delta_h}, \eqref{ENdef}, and \eqref{TNbound} prove this theorem only for $x>0$, but the symmetries \eqref{symm} and \eqref{symmN} imply that
$E_N(-x)=-E_N(x)$, so that \eqref{errb} holds also for $x<0$, and, by continuity, also for $x=0$ (and in fact $E_N(0)=\eta_N(0)=0$).
\begin{theorem} \label{thm:main_pw_error}
For $x\in\R$,
\begin{equation} \label{errb}
|E_N(x)| \leq \eta_N(x):= \Delta_h(|x|) + \frac{(2\pi+1)|x|}{2\pi A_{N}\sqrt{x^4+A_{N}^4}}\, \re^{-A_{N}^2},
\end{equation}
where
\begin{equation} \label{Deldef2}
\Delta_h(x) = \left\{\begin{array}{ll}
\dfrac{x\,\re^{-A_N^2}}{\sqrt{\pi}\,(A_N^2-x^2/2)\, \left(1- \re^{-2A_N^2}\right)}, & 0\leq \dfrac{x}{\sqrt{2}} \leq \frac{3}{4}A_N, \\
\dfrac{4x\,\re^{-A_N^2}\left(1 + 2\sqrt{\pi}\,\re^{-\beta A_N^2}\right)}{\sqrt{\pi}\,A_N(A_N+x/\sqrt{2})\, \left(1- \re^{-2A_N^2}\right)}, & \frac{3}{4}A_N< \dfrac{x}{\sqrt{2}} < \frac{5}{4}A_N, \\
\dfrac{x\,\,\re^{-A_N^2}}{\sqrt{\pi}\,(x^2/2-A_N^2)\, \left(1- \re^{-2A_N^2}\right)} + \dfrac{\re^{-\sqrt{2}\,A_N x}}{1- \re^{-\sqrt{2}A_N x}} , & \dfrac{x}{\sqrt{2}} \geq \frac{5}{4}A_N.
                    \end{array}\right.
\end{equation}
\end{theorem}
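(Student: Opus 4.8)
The plan is to assemble the bound directly from the results already in hand, since the analytic heavy lifting has been carried out in Theorem~\ref{thm:Delta_h} and Proposition~\ref{prop:TN}. First I would treat the case $x>0$. By the decomposition \eqref{ENdef} we have $E_N(x) = e_h^* + T_N$, so the triangle inequality gives $|E_N(x)| \leq |e_h^*| + |T_N|$. Theorem~\ref{thm:Delta_h} bounds the first term by $\Delta_h(x)$, and the estimate \eqref{TNbound} (which is Proposition~\ref{prop:TN} specialised to the chosen $h$) bounds the second by $(2\pi+1)x\,\re^{-A_N^2}/(2\pi A_N\sqrt{x^4+A_N^4})$, which is exactly the second summand of $\eta_N$ in \eqref{errb}. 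Thus the only genuine work is to rewrite $\Delta_h$, defined in \eqref{Deldef} in terms of $h$, into the $A_N$-notation of \eqref{Deldef2}.

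For this rewriting I would substitute the chosen step-size $h=\sqrt{\pi/(N+1/2)}$, under which $\pi/h = A_N$ and hence $\pi^2/h^2 = A_N^2$, $h/\pi = 1/A_N$, and $\sqrt{2}\,\pi x/h = \sqrt{2}\,A_N x$. Inserting these into \eqref{main2}, \eqref{b4} and \eqref{b2} turns $\delta_1,\delta_2,\delta_3$ into the three cases of \eqref{Deldef2}. The one point requiring care is the absolute value $|\pi^2/h^2 - x^2/2| = |A_N^2-x^2/2|$: in the first region $x/\sqrt{2}\leq \tfrac34 A_N$ forces $x^2/2 \leq \tfrac{9}{16}A_N^2 < A_N^2$, so $|A_N^2-x^2/2| = A_N^2-x^2/2$; in the third region $x/\sqrt{2}\geq \tfrac54 A_N$ forces $x^2/2 \geq \tfrac{25}{16}A_N^2 > A_N^2$, so $|A_N^2-x^2/2| = x^2/2 - A_N^2$. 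This is precisely why the region boundaries in \eqref{Deldef} are placed away from the singular value $x/\sqrt{2} = A_N$ of $\delta_1$ (the value $x=\sqrt{2}\,\pi/h$ at which the Hunter--Regan bound \eqref{HRbound} blows up), the intermediate region being covered instead by $\delta_2$.

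Finally I would extend from $x>0$ to all of $\R$ using the symmetries. The normalisations \eqref{symm} and \eqref{symmN} give $F(-x)=1-F(x)$ and $F_N(-x)=1-F_N(x)$, whence $E_N(-x) = F(-x)-F_N(-x) = -(F(x)-F_N(x)) = -E_N(x)$. Since $\eta_N$ depends on $x$ only through $|x|$, $x^2$ and $x^4$, the bound \eqref{errb} for $x>0$ immediately yields the bound for $x<0$, and the case $x=0$ follows by continuity, with in fact $E_N(0)=\eta_N(0)=0$.

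I do not expect a substantive obstacle at this stage: all the contour-integration analysis, including the delicate treatment of the pole lying close to the contour $\Gamma_H^*$, is already encapsulated in Theorem~\ref{thm:Delta_h}, and Proposition~\ref{prop:TN} already controls the tail $T_N$. The only thing one must not overlook is the sign bookkeeping for $A_N^2-x^2/2$ described above, together with checking that the piecewise definition \eqref{Deldef2} is a valid upper bound across the whole line, including at the junctions $x/\sqrt{2}=\tfrac34 A_N$ and $x/\sqrt{2}=\tfrac54 A_N$, rather than only on the interiors of the three regions.
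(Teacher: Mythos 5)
Your proposal is correct and follows essentially the same route as the paper, whose proof is given in the text immediately preceding the theorem: the decomposition $E_N(x)=e_h^*+T_N$ from \eqref{ENdef}, the triangle inequality combined with Theorem~\ref{thm:Delta_h} and the specialised tail bound \eqref{TNbound}, and the extension to $x\leq 0$ via the oddness $E_N(-x)=-E_N(x)$ implied by \eqref{symm} and \eqref{symmN}, plus continuity at $x=0$. Your additional remarks on the sign of $A_N^2-x^2/2$ in the outer regions and on checking the junction points are accurate and consistent with how \eqref{Deldef2} is written.
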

We will compare $|E_N(x)|$ to the upper bound $\eta_N(x)$ for $N=9$ in Figure \ref{fig2} below. The following theorem estimates the maximum value of $\eta_N(x)$ on the real line.

\begin{theorem} \label{thm:main_abs_bound}
For $x\in\R$,
\begin{equation} \label{main3}
|F(x)-F_N(x)| \leq \eta_N(x) \leq c_N \frac{\re^{-\pi N}}{\sqrt{N+1/2}},
\end{equation}
where
\begin{align}\label{cNdef}
c_N 
&=\frac{20\sqrt{2}\re^{-\pi/2}}{9\pi\left(1- \re^{-2A_N^2}\right)} \left(1 + 2\sqrt{\pi}\,\re^{-\beta A_N^2}\right) + \frac{(2\pi+1)\re^{-\pi/2}}{2\sqrt{2}\,\pi^{3/2} A_N},
\end{align}
which decreases as $N$ increases, with
\begin{equation} \label{clim}
c_1 \approx 0.825 \;\mbox{ and }\; \lim_{N\rightarrow\infty}c_{N} = \frac{20\sqrt{2}\re^{-\pi/2}}{9\pi}\approx0.208.
\end{equation}
\end{theorem}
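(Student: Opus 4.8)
The plan is to establish the second inequality in \eqref{main3}, namely $\eta_N(x)\le c_N\,\re^{-\pi N}/\sqrt{N+1/2}$, uniformly for $x\ge 0$ (the values $x<0$ being covered by the symmetry $\eta_N(-x)=\eta_N(x)$). I would write $\eta_N(x)=\Delta_h(x)+T(x)$, with $T(x):=\frac{(2\pi+1)x}{2\pi A_N\sqrt{x^4+A_N^4}}\,\re^{-A_N^2}$ the second term in \eqref{errb}, and bound the two pieces separately by constants $M_1$ and $M_2$. Throughout I use $A_N^2=(N+1/2)\pi$, so that $\re^{-A_N^2}=\re^{-\pi/2}\re^{-\pi N}$ and $\sqrt{\pi}\,A_N=\pi\sqrt{N+1/2}$; the target is to arrange that $M_1$ and $M_2$ equal the first and second summands of $c_N$, each multiplied by the common factor $\re^{-\pi N}/\sqrt{N+1/2}$, so that $\eta_N(x)\le M_1+M_2=c_N\,\re^{-\pi N}/\sqrt{N+1/2}$.

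For $T$ I would maximise $x/\sqrt{x^4+A_N^4}$ over $x>0$: setting $u=x^2$ and differentiating $u/(u^2+A_N^4)$ gives the unique critical point $u=A_N^2$, i.e.\ $x=A_N$, at which the value is $1/(\sqrt{2}\,A_N)$. Hence $\max_{x\ge0}T(x)=\frac{(2\pi+1)}{2\sqrt{2}\,\pi A_N^2}\,\re^{-A_N^2}$, and inserting $\re^{-A_N^2}=\re^{-\pi/2}\re^{-\pi N}$ and $A_N^2=\pi(N+1/2)$ shows this equals $M_2:=\frac{(2\pi+1)\re^{-\pi/2}}{2\sqrt{2}\,\pi^{3/2}A_N}\cdot\frac{\re^{-\pi N}}{\sqrt{N+1/2}}$, precisely the second summand of $c_N$ times the common factor.

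The bound $\Delta_h(x)\le M_1$ I would read off \eqref{Deldef2} region by region, writing $B:=\re^{-A_N^2}/(\sqrt{\pi}A_N(1-\re^{-2A_N^2}))$ for the recurring factor. On the first region $x/(A_N^2-x^2/2)$ has positive derivative $(A_N^2+x^2/2)/(A_N^2-x^2/2)^2$, so $\Delta_h$ increases and attains its maximum $\frac{12\sqrt{2}}{7}B$ at $x=\frac{3\sqrt{2}}{4}A_N$ (where $A_N^2-x^2/2=\frac{7}{16}A_N^2$). On the second region $x/(A_N+x/\sqrt{2})$ increases (derivative $A_N/(A_N+x/\sqrt{2})^2$), so its supremum is at $x=\frac{5\sqrt{2}}{4}A_N$, where $A_N+x/\sqrt{2}=\frac{9}{4}A_N$ and the value is $\frac{5\sqrt{2}}{9}$; this yields $M_1:=\frac{20\sqrt{2}}{9}\,(1+2\sqrt{\pi}\,\re^{-\beta A_N^2})\,B$, which is exactly the first summand of $c_N$ times the common factor. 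Since $\frac{12\sqrt{2}}{7}<\frac{20\sqrt{2}}{9}\le\frac{20\sqrt{2}}{9}(1+2\sqrt{\pi}\,\re^{-\beta A_N^2})$, the first region is dominated by $M_1$. On the third region both summands of $\Delta_h$ decrease in $x$ (the first because $x/(x^2/2-A_N^2)$ has negative derivative), so their sum is largest at the left endpoint $x=\frac{5\sqrt{2}}{4}A_N$, where $x^2/2-A_N^2=\frac{9}{16}A_N^2$ and the first summand equals $\frac{20\sqrt{2}}{9}B$ --- matching $M_1$ but without the correction factor.

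The main obstacle is then to absorb the third-region tail into that correction. At $x=\frac{5\sqrt{2}}{4}A_N$ one has $\sqrt{2}A_Nx=\frac{5}{2}A_N^2$, so it remains to verify
\[
\frac{\re^{-\frac{5}{2}A_N^2}}{1-\re^{-\frac{5}{2}A_N^2}}\ \le\ \frac{40\sqrt{2}}{9A_N(1-\re^{-2A_N^2})}\,\re^{-(1+\beta)A_N^2},
\]
which is exactly what makes the third-region maximum $\le M_1$. Since $\frac{5}{2}>1+\beta$ (as $\beta\approx0.0536$), the quotient of left to right sides equals $\frac{9A_N}{40\sqrt{2}}\,\frac{1-\re^{-2A_N^2}}{1-\re^{-\frac{5}{2}A_N^2}}\,\re^{-(3/2-\beta)A_N^2}$, in which the middle fraction is $\le 1$ and the exponential decays; this quotient is decreasing in $A_N^2$ on the relevant range, so it suffices to check the inequality at the smallest value $A_N^2=3\pi/2$ (the case $N=1$), where it holds comfortably. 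This gives $\Delta_h(x)\le M_1$ for all $x\ge0$, and hence $\eta_N(x)\le M_1+M_2=c_N\,\re^{-\pi N}/\sqrt{N+1/2}$. For the final assertions \eqref{clim}, the second summand of $c_N$ is proportional to $1/A_N$ and so decreases in $N$, while the $N$-dependence of the first summand lies entirely in $(1+2\sqrt{\pi}\,\re^{-\beta A_N^2})/(1-\re^{-2A_N^2})$, whose numerator decreases and denominator increases with $A_N^2$; hence $c_N$ is decreasing, its limit is $20\sqrt{2}\,\re^{-\pi/2}/(9\pi)$ since both corrections vanish as $N\to\infty$, and $c_1\approx0.825$ follows by evaluation at $A_1^2=3\pi/2$.
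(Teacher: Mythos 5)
Your proposal is correct and follows essentially the same route as the paper: the paper likewise bounds the tail term via $|x|/\sqrt{x^4+A_N^4}\le 1/(\sqrt{2}\,A_N)$ and shows that $\Delta_h$ attains its supremum $\frac{20\sqrt{2}}{9}\bigl(1+2\sqrt{\pi}\,\re^{-\beta A_N^2}\bigr)\re^{-A_N^2}/\bigl(\sqrt{\pi}A_N(1-\re^{-2A_N^2})\bigr)$ as $x\to\frac{5}{4}\sqrt{2}A_N$ from below, the only stylistic difference being that the paper asserts monotonicity of $\Delta_h$ on either side of that point and a downward jump there, where you maximise region by region and verify the same key inequality at the region-2/3 boundary in more explicit detail.
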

\begin{proof}
 It is easy to see that $\Delta_h(x)$ is increasing on $[0,\frac{5}{4}\sqrt{2}\,A_N)$ and decreasing on $[\frac{5}{4}\sqrt{2}\,A_N,\infty)$. Further,  where $\Delta_h(\frac{5}{4}\sqrt{2}\,A_N^-)$ denotes the limiting value of $\Delta_h(x)$ as $x\to \frac{5}{4}\sqrt{2}\,A_N$ from below, since $2A_N^{-1}>\re^{-A_N^2}$,
\begin{eqnarray*}
\Delta_h\left(\textstyle{\frac{5}{4}}\sqrt{2}A_N^-\right) &= &\frac{20\sqrt{2}\,\re^{-A_N^2}}{9\sqrt{\pi}\,A_N\, \left(1- \re^{-2A_N^2}\right)}\left(1 + 2\sqrt{\pi}\,\re^{-\beta A_N^2}\right)\\
&>& \frac{20\sqrt{2}\,\,\re^{-A_N^2}}{9\sqrt{\pi}\,A_N\, \left(1- \re^{-2A_N^2}\right)} + \frac{\re^{-5A^2_N/2}}{1- \re^{-5A^2_N/2}} =\Delta_h\left(\textstyle{\frac{5}{4}}\sqrt{2}\,A_N\right).
\end{eqnarray*}
Similarly, $x\Delta_h(x)$ is increasing on $[0,\frac{5}{4}\sqrt{2}\,A_N)$ and decreasing on $[\frac{5}{4}\sqrt{2}\,A_N,\infty)$. Thus, for $x\geq 0$,
\begin{equation} \label{Delbounds}
\Delta_h(x) \leq \Delta_h\left(\textstyle{\frac{5}{4}}\sqrt{2}\,A_N^-\right) \quad \mbox{ and }\quad x\Delta_h(x) \leq \textstyle{\frac{5}{4}}\sqrt{2}\,A_N\Delta_h\left(\textstyle{\frac{5}{4}}\sqrt{2}\,A_N^-\right).
\end{equation}
Moreover,
\begin{equation} \label{bbb}
\frac{|x|}{\sqrt{x^4+A_{N}^4}}\leq \frac{1}{\sqrt{2}\, A_N} \; \mbox{ and }\; \frac{x^2}{\sqrt{x^4+A_{N}^4}}< 1,  \quad \mbox{for } x\in\R.
\end{equation}
Combining \eqref{errb}, \eqref{Delbounds} and \eqref{bbb} we reach the result. \qed
\end{proof}

We can also bound the relative error in our approximation $F_{N}(x)$. The proof of Theorem \ref{lemma} is postponed to the appendix.
\begin{theorem}\label{lemma}
\begin{equation}\label{Flb1}
|F(x)|\geq\frac{1}{2+2\sqrt{\pi}\,x}, \quad \mbox{for } x\geq 0,
\end{equation}
and
\begin{equation}\label{Flb2}
|F(x)|\geq\frac{1}{2}, \quad \mbox{for } x\leq 0.
\end{equation}
\end{theorem}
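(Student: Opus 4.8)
The plan is to handle the two sign ranges of $x$ separately, since \eqref{Flb2} is elementary while \eqref{Flb1} requires the full modulus of $\erfc$ and is where the real work lies. Both start from the trivial bound $|F(x)|\ge\mathrm{Re}(F(x))$ combined with the splitting of \eqref{inter} into real and imaginary parts: writing $C:=C(\sqrt{2/\pi}\,x)$ and $S:=S(\sqrt{2/\pi}\,x)$, one finds
\[
F(x)=\tfrac12\big[(1-C-S)+\ri\,(C-S)\big],\qquad\text{so}\qquad \mathrm{Re}(F(x))=\tfrac12\,(1-C-S).
\]
For $x\le0$ I would prove \eqref{Flb2} by showing $\mathrm{Re}(F(x))\ge\tfrac12$, i.e.\ $C+S\le0$; since $C,S$ are odd this is equivalent to $C(u)+S(u)\ge0$ for $u\ge0$. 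Using $\cos\theta+\sin\theta=\sqrt2\cos(\theta-\pi/4)$ and substituting $\sigma=\tfrac\pi2 t^2$,
\[
C(u)+S(u)=\sqrt2\int_0^u\cos\!\big(\tfrac\pi2 t^2-\tfrac\pi4\big)\,\rd t=\int_0^{\pi u^2/2}\frac{\cos(\sigma-\pi/4)}{\sqrt{\pi\sigma}}\,\rd\sigma .
\]
The integrand changes sign on consecutive $\sigma$-intervals while the positive weight $(\pi\sigma)^{-1/2}$ is decreasing, so the successive lobe areas form a decreasing sequence (the first lobe dominating because it carries the integrable weight singularity at $\sigma=0$); the running integral is then an alternating sum of decreasing positive terms and hence never negative. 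This proves \eqref{Flb2}.

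For $x\ge0$ the real part is useless: there $1-C-S$ oscillates about $0$ and $\mathrm{Re}(F(x))$ is not sign-definite, so no bound on it can reproduce the asymptotically correct $\tfrac1{2+2\sqrt\pi x}\sim\tfrac1{2\sqrt\pi x}$. Instead I would use \eqref{Frelw} to write $|F(x)|=\tfrac12|\erfc(\zeta)|$ with $\zeta=\re^{-\ri\pi/4}x$, so that $\mathrm{Re}(\zeta)=x/\sqrt2\ge0$ and $|\zeta|=x$; then \eqref{Flb1} is exactly the sharp half-plane estimate
\[
|\erfc(\zeta)|\ \ge\ \frac1{1+\sqrt\pi\,|\zeta|},\qquad \mathrm{Re}(\zeta)\ge0,
\]
read off along the ray $\arg\zeta=-\pi/4$. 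Proving this lower bound is the content I would defer to the appendix, and it is the main obstacle.

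The difficulty is that the bound is asymptotically sharp at \emph{both} ends — equality at $x=0$ (where $|F(0)|=\tfrac12$) and ratio tending to $1$ as $x\to\infty$ — so any estimate that discards part of $F$ must fail somewhere: bounding the modulus of the integral in \eqref{Fint} below by its real part is exact as $x\to\infty$ but loses a constant as $x\to0$, while the crude estimate $|F(x)|\ge\frac{x}{2\sqrt2\,\pi}\int_{-\infty}^\infty\re^{-t^2}/\sqrt{x^4+t^4}\,\rd t$ (valid because the integrand of \eqref{Fint} has argument confined to a quarter-plane) loses the factor $1/\sqrt2$ as $x\to\infty$. To retain the full modulus I would start from $\tfrac{\sqrt\pi}2\re^{\zeta^2}\erfc(\zeta)=\int_0^\infty\re^{-s^2-2\zeta s}\,\rd s=:G(\zeta)$, note $|\re^{\zeta^2}|=1$ on $\arg\zeta=-\pi/4$, and compute $|G(\zeta)|^2=G(\zeta)\overline{G(\zeta)}$ as a double integral; symmetrising in the two integration variables kills the imaginary part, and the substitution $p=s+r$, $q=s-r$ gives the real expression
\[
|G(\zeta)|^2=\frac12\int_0^\infty\re^{-p^2/2-cp}\left(\int_{-p}^{p}\re^{-q^2/2}\cos(cq)\,\rd q\right)\rd p,\qquad c=\sqrt2\,x .
\]
The remaining, and hardest, step is to bound this below by $\pi/\big(4(1+\sqrt\pi x)^2\big)$. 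Here the inner $q$-integral is \emph{not} of one sign, so the estimate is genuinely delicate; I would attempt it by pinning down the exact value $\pi/4$ at $x=0$, controlling the oscillatory inner integral for $p$ large against the Gaussian weight, and tracking the decay in $x$ so as to match the known large-argument asymptotics of $\erfc$. This modulus estimate is the step I expect to absorb essentially all the effort.
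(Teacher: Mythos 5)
Your reduction of \eqref{Flb1} to the half-plane estimate $|\erfc(\zeta)|\geq 1/(1+\sqrt{\pi}\,|\zeta|)$ for $\mathrm{Re}(\zeta)\geq 0$, read off along $\arg\zeta=-\pi/4$, is exactly the reduction the paper makes (it is the second inequality in \eqref{erfclb} of Theorem \ref{thm:erfc}, with $\re^{y^2-x^2}=1$ on that ray). But that estimate \emph{is} the theorem: everything else is bookkeeping. Your proposed proof of it --- write $|G(\zeta)|^2$ as a symmetrised double integral, change variables to $p=s+r$, $q=s-r$, and then bound the resulting expression below by $\pi/(4(1+\sqrt{\pi}x)^2)$ --- stops precisely at the point where the difficulty lives. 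You say yourself that the inner $q$-integral is not of one sign and that you ``would attempt'' the lower bound by pinning down the value at $x=0$ and ``controlling the oscillatory inner integral''; no mechanism is given for doing so, and it is not clear this double-integral representation admits a sharp-at-both-ends lower bound by elementary means. That is a genuine gap, not a deferred routine step. The paper's route avoids the oscillatory integral entirely: it sets $\mathcal{G}(z)=(1+\sqrt{\pi}\,z)\re^{z^2}\erfc(z)$, notes that $\erfc$ has no zeros in the closed right half-plane so that $1/\mathcal{G}$ is analytic and bounded there, and applies a Phragm\'en--Lindel\"of/maximum-principle argument (via the auxiliary functions $\mathcal{G}_\alpha$, $\alpha>1$) to reduce $|\mathcal{G}|\geq 1$ on the whole half-plane to the imaginary axis. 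There $\re^{z^2}\erfc(z)$ has the explicit closed form \eqref{thing} in terms of $\int_0^y\re^{t^2}\,\rd t$, and the inequality becomes an elementary one-variable estimate. The two ingredients you are missing are the zero-freeness of $\erfc$ in $\mathrm{Re}(z)\geq 0$ and the idea of pushing the problem to the boundary line where the function is explicitly computable.

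On \eqref{Flb2} your route genuinely differs from the paper's and is workable: showing $\mathrm{Re}(F(x))=\tfrac12(1-C-S)\geq\tfrac12$ for $x\leq 0$ via $C(u)+S(u)\geq 0$ for $u\geq 0$ is a legitimate elementary alternative. Two caveats: the lobe-domination claim $a_1>a_2$ is not automatic from ``decreasing weight'' because the first positive lobe of $\cos(\sigma-\pi/4)$ is truncated to length $3\pi/4$ while the second has length $\pi$, so it needs an explicit numerical check (it does hold: the first lobe area is $C(\sqrt{3/2})+S(\sqrt{3/2})\approx 1.3$ versus at most $4/(\pi\sqrt{3})\approx 0.74$ for the second); and the argument for the running integral must treat stopping mid-lobe, which your alternating-sum framing handles but should be said. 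The paper instead gets \eqref{Flb2} in one line from the symmetry $F(-x)=1-F(x)$ together with the \emph{upper} bound $|\erfc(z)|\leq\re^{y^2-x^2}$ of Theorem \ref{thm:erfc}, which gives $|F(x)|\leq\tfrac12$ for $x\geq 0$ and hence $|F(x)|\geq 1-|F(-x)|\geq\tfrac12$ for $x\leq 0$; that upper bound is again proved by the same reduce-to-the-imaginary-axis device. So your part (ii) is salvageable with a small computation, but part (i) as proposed does not constitute a proof.
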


\begin{theorem} \label{thm:rel}
\begin{equation} \label{main4}
\frac{|F(x)-F_N(x)|}{|F(x)|} \leq \frac{\eta_N(x)}{|F(x)|}\leq \left\{\begin{array}{ll}
                                                                     c_N^* \re^{-\pi N}, & \;\mbox{for } x\geq 0, \\
                                                                     2c_N \dfrac{\re^{-\pi N}}{\sqrt{N+1/2}}, & \;\mbox{for } x\leq 0,
                                                                   \end{array}\right.
\end{equation}
where
\begin{eqnarray} \nonumber
c_N^* 
&=& \frac{10\sqrt{2}\left(4+5\sqrt{2\pi}A_N\right)\left(1+2\sqrt{\pi}\re^{-\beta A_N^2}\right)}{9\sqrt{\pi}\, \re^{\pi/2}\, A_N\left(1-\re^{-2A_N^2}\right)} + \frac{(2\pi+1)}{\pi\re^{\pi/2} A_N }\, \left(\frac{1}{\sqrt{2}\, A_N} + \sqrt{\pi}\right),
\end{eqnarray}
which decreases as $N$ increases, with $c_1^*\approx 10.4$ and $\lim_{N\to\infty} c_N^*=100\re^{-\pi/2}/9$ $\approx 2.3$.
\end{theorem}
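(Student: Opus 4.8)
The plan is to combine the pointwise absolute bound of Theorem \ref{thm:main_pw_error} with the two lower bounds on $|F(x)|$ from Theorem \ref{lemma}. The first inequality in \eqref{main4} is immediate, since $|F(x)-F_N(x)| \le \eta_N(x)$ by Theorem \ref{thm:main_pw_error}. The case $x \le 0$ is then easy: the bound $|F(x)| \ge 1/2$ from \eqref{Flb2} gives $\eta_N(x)/|F(x)| \le 2\eta_N(x)$, and the maximum estimate $\eta_N(x) \le c_N \re^{-\pi N}/\sqrt{N+1/2}$ from Theorem \ref{thm:main_abs_bound} delivers the second line of \eqref{main4} directly.

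The real work is the case $x \ge 0$, where \eqref{Flb1} gives only $1/|F(x)| \le 2(1+\sqrt{\pi}\,x)$, so that $\eta_N(x)/|F(x)| \le 2(1+\sqrt{\pi}\,x)\eta_N(x)$, and the linearly growing factor $(1+\sqrt{\pi}\,x)$ must be shown not to destroy convergence. I would split $\eta_N(x)$ into its two pieces, $\Delta_h(x)$ and the tail contribution $(2\pi+1)x\,\re^{-A_N^2}/(2\pi A_N\sqrt{x^4+A_N^4})$, and treat each separately. For the $\Delta_h$ piece, the key is to reuse the two monotonicity facts already established in the proof of Theorem \ref{thm:main_abs_bound}, namely that both $\Delta_h(x)$ and $x\Delta_h(x)$ attain their suprema on $[0,\infty)$ at $x = \tfrac{5}{4}\sqrt{2}\,A_N^-$. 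These control $2(1+\sqrt{\pi}\,x)\Delta_h(x) = 2\Delta_h(x) + 2\sqrt{\pi}\,x\Delta_h(x)$ by $2(1+\tfrac{5}{4}\sqrt{2\pi}\,A_N)\Delta_h(\tfrac{5}{4}\sqrt{2}\,A_N^-)=\tfrac12(4+5\sqrt{2\pi}\,A_N)\Delta_h(\tfrac{5}{4}\sqrt{2}\,A_N^-)$. For the tail piece, the weight $2(1+\sqrt{\pi}\,x)$ multiplied by $x/\sqrt{x^4+A_N^4}$ produces exactly the two quantities bounded in \eqref{bbb}, namely $x/\sqrt{x^4+A_N^4} \le 1/(\sqrt{2}\,A_N)$ and $x^2/\sqrt{x^4+A_N^4}<1$, so the linear growth is absorbed into a bounded quantity.

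After these substitutions the final step is cosmetic: I insert the explicit value of $\Delta_h(\tfrac{5}{4}\sqrt{2}\,A_N^-)$ from the proof of Theorem \ref{thm:main_abs_bound} and use $A_N^2=(N+1/2)\pi$, so that $\re^{-A_N^2}=\re^{-\pi/2}\re^{-\pi N}$, to pull out the common factor $\re^{-\pi N}$. Since $\tfrac12\cdot 20\sqrt{2}=10\sqrt2$, the two resulting coefficients assemble into precisely the two summands of $c_N^*$, establishing the first line of \eqref{main4}. The stated monotonicity of $c_N^*$ and the limits $c_1^*\approx 10.4$ and $\lim_{N\to\infty}c_N^*=100\re^{-\pi/2}/9$ then follow by inspection: as $A_N\to\infty$ the factors $\re^{-\beta A_N^2}$ and $\re^{-2A_N^2}$ vanish and the second summand is $O(1/A_N)$, while the first tends to $10\sqrt2\cdot 5\sqrt{2\pi}\,A_N/(9\sqrt\pi\,\re^{\pi/2}A_N)=100\re^{-\pi/2}/9$.

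The only genuine obstacle is the $x\ge 0$ branch, where the linearly growing weight $(1+\sqrt{\pi}\,x)$ coming from \eqref{Flb1} could in principle overwhelm the exponential decay. The mechanism that rescues the estimate is the pairing of this weight with the monotonicity/decay facts already in hand: $x\Delta_h(x)$ peaks at $\tfrac{5}{4}\sqrt{2}\,A_N$ and $x^2/\sqrt{x^4+A_N^4}<1$, so that each product $x\times(\text{decaying quantity})$ stays bounded by a constant multiple of $A_N$ rather than blowing up. Recognising that these are exactly the bounds already packaged in the proof of Theorem \ref{thm:main_abs_bound} and in \eqref{bbb} is what keeps the argument short.
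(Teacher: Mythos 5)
Your proposal is correct and follows essentially the same route as the paper: the $x\le 0$ case via \eqref{Flb2} and \eqref{main3}, and the $x\ge 0$ case by combining \eqref{Flb1}, \eqref{errb}, \eqref{Delbounds} and \eqref{bbb} to absorb the linear weight $2(1+\sqrt{\pi}\,x)$ into $x\Delta_h(x)$ and $x^2/\sqrt{x^4+A_N^4}$. Your intermediate bound $\left(2+\tfrac{5}{2}\sqrt{2\pi}\,A_N\right)\Delta_h\left(\tfrac{5}{4}\sqrt{2}\,A_N^-\right)+\tfrac{(2\pi+1)}{\pi}\tfrac{\re^{-A_N^2}}{A_N}\left(\tfrac{1}{\sqrt{2}\,A_N}+\sqrt{\pi}\right)$ and the resulting $c_N^*$ coincide exactly with the paper's.
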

\begin{proof}
Combining \eqref{Flb1}, \eqref{errb}, \eqref{Delbounds}, and \eqref{bbb}, we see that, for $x\geq 0$,
$$
\frac{\eta_N(x)}{|F(x)|}\leq \left(2+\textstyle{\frac{5}{2}}\sqrt{2\pi}A_N\right)\Delta_h\left(\textstyle{\frac{5}{4}}\sqrt{2}\,A_N^-\right) + \frac{(2\pi+1)}{\pi }\, \frac{\re^{-A_{N}^2}}{A_{N}}\left(\frac{1}{\sqrt{2}\, A_N} + \sqrt{\pi}\right).
$$
This implies the bound \eqref{main4} for $x\geq 0$. The bound \eqref{main4} for $x\leq 0$ follows immediately from \eqref{Flb2} and \eqref{main3}. \qed
 \end{proof}

 In the above theorems we use \eqref{errb} and \eqref{Deldef2} to bound the maximum absolute and relative errors in the approximation $F_N(x)$. These inequalities, additionally, imply that $F_N(x)$ is particularly accurate for $|x|$ small. For $|x| \leq A_N/\sqrt{2}=\sqrt{(N+1/2)\pi/2}$, it follows from \eqref{errb} and \eqref{Deldef2} that
\begin{equation} \label{MAIN3}
|F(x)-F_N(x)| \leq \eta(x) \leq \tilde c_N |x|\frac{\re^{-\pi N}}{2N+1}
\end{equation}
where
\begin{equation} \label{tildec}
\tilde c_N = \frac{8}{3\pi^{3/2}\re^{\pi/2} \left(1- \re^{-2A_N^2}\right)}+ \frac{(2\pi+1)}{\pi^2 \re^{\pi/2} A_{N}},
\end{equation}
which decreases as $N$ increases, with $\tilde c_1 \approx 0.17 $ and $\lim_{N\to \infty} \tilde c_N = 8/(3\pi^{3/2}\re^{\pi/2})\approx 0.10$.

\subsection{Extensions of the error bounds into the complex plane} \label{subsec:complex}
In \S\ref{sec intro} we have made claims regarding the analyticity of the approximation $F_N(x)$, considered as a function of $x$ in the complex plane. We justify these claims now. One attractive feature of the modified trapezium rule approximation $I_h^*$ is that, in contrast to $I_h$, it is entire as a function of $x$. This is not immediately obvious: $I_h^*=I_h+PC_h$, and $PC_h$ has simple pole singularities at $x=\re^{-\ri\pi/4}\tau_k$, $k\in\Z$. But $I_h$ also has simple poles at the same points and it is an easy calculation to see that the residues add to zero, so that the singularities cancel out. Since $F_N(x) = I_h^* - T_N$, with $h$ given by \eqref{hchoice}, it follows that the singularities of $F_N(x)$ are those of $T_N$, {\em i.e.}, simple poles at $\pm  \re^{-\ri\pi/4}t_k$, for $k=N+1,N+2, ...$. Thus $F_N(x)$ is a meromorphic function and, in particular, is analytic in the strip $|\mathrm{Im}(x)| < A_N/\sqrt{2}$ and in the first and third quadrants of the complex plane.

We will note two consequences of this analyticity and the bounds that we have already proved. In these arguments we will use an extension of the maximum principle for analytic functions to unbounded domains, that if $w(z)$ is analytic in an open quadrant in the complex plane, let us say $Q = \{z\in \C:0< |\arg(z)| < \pi/2\}$, and is continuous and bounded in its closure, then
\begin{equation} \label{pl}
\sup_{z \in Q} |w(z)| \leq \sup_{z\in \partial Q} |w(z)|,
\end{equation}
 where $\partial Q$ denotes the boundary of the quadrant. (This sort of extension of the maximum principle to unbounded domains is due to Phragmen and Lindel\"of; see, {\em e.g.}, \cite{Rudin}.)

 The first consequence is that, from \eqref{ENdef}, \eqref{main3}, and \eqref{conj}, it follows that the bound \eqref{main3} holds on both the real and imaginary axes. Further, from \eqref{Frelw} and the asymptotics of $\erfc(z)$ in the complex plane \cite[(7.1.23)]{AS}, it follows that $F(z)\to 0$, uniformly in $\arg(z)$, for $0\leq \arg(z)\leq \pi/2$; moreover, it is clear from \eqref{FNdef2} that the same holds for $F_N(z)$ and hence for $E_N(z)$. Thus \eqref{pl} implies that \eqref{main3} holds for $0\leq \arg(z)\leq \pi/2$, and \eqref{symm} and \eqref{symmN} then imply that \eqref{main3} holds also for $\pi\leq \arg(z)\leq 3\pi/4$.

It is clear from the derivations above that, if $h$ is given by \eqref{hchoice}, then $I_h^*$ also satisfies the bound \eqref{main3}, {\em i.e.},
\begin{equation} \label{bb}
|F(z)-I_h^*| \leq c_N \frac{\re^{-\pi N}}{\sqrt{N+1/2}}\, ,
\end{equation}
this holding in the first instance for real $z$, then for imaginary $z$, and finally for all $z$ in the first and third quadrants.
The bound \eqref{main3} cannot hold in the second or fourth quadrant because $E_N(z)=F(z)-F_N(z)$ has poles there. This issue does not hold for $F(z)-I_h^*$, which is an entire function, but \eqref{bb} cannot hold in the whole complex plane because this, by Liouville's theorem \cite{Rudin}, would imply that $F(z)-I_h^*$ is a constant.
What does hold is that $\re^{-\ri z^2}(F(z)-I_h^*)$ is bounded in the second and fourth quadrants, this a consequence of the definition of $I_h^*$ and the asymptotics of $\re^{z^2}\erfc(z)$ at infinity. Thus it follows from \eqref{pl}, and since $|\re^{-\ri z^2}|=1$ if $z$ is real or pure imaginary, that
\begin{equation} \label{bb2}
|F(z)-I_h^*| \leq c_N \re^{-xy}\, \frac{\re^{-\pi N}}{\sqrt{N+1/2}}\, ,
\end{equation}
for $z=x+\ri y$ in the second and fourth quadrants.

We can use the bound \eqref{bb2} to obtain a bound on $E_N(x)$ in the second and fourth quadrants. Clearly, where $T_N$ is defined by \eqref{TNdef}, with $h$ given by \eqref{hchoice}, for $z=x+\ri y$ in the second and fourth quadrants,
\begin{equation} \nonumber
|F(z)-F_N(z)| \leq c_N \re^{-xy}\, \frac{\re^{-\pi N}}{\sqrt{N+1/2}} + |T_N|.
\end{equation}
Further, arguing as below \eqref{TNdef}, if $|y|\leq A_N/(2\sqrt{2})$ so that
$$
|z^2+\ri t_k^2| \geq \left(\frac{A_N}{\sqrt{2}}-|y|\right)\left(\left(\frac{A_N}{\sqrt{2}}-|y|\right)^2 + \left(\frac{A_N}{\sqrt{2}}+|x|\right)^2\right)\geq \frac{A_N}{2\sqrt{2}}\left(A_N^2/8 + |x|^2\right),
$$
which implies that $|z^2+\ri t_k^2| \geq |z|A_N/(2\sqrt{2})$, then
$$
|T_N| \leq \re^{-xy} \frac{(2\pi+1)\sqrt{2}}{\pi A_N^2} \, \re^{-A_N^2} = \re^{-xy} \frac{\sqrt{2}(2\pi+1)}{\pi^{3/2}\exp(\pi/2) (N+1/2)} \, \re^{-\pi N}.
$$
Thus, for $z=x+\ri y$ in the second and fourth quadrants with $|y| \leq A_N/(2\sqrt{2})$,
\begin{equation} \label{bb3}
|F(z)-F_N(z)| \leq  \hat c_N \re^{-xy}\, \frac{\re^{-\pi N}}{\sqrt{N+1/2}}
\end{equation}
where
\begin{equation} \label{hatcN}
\hat c_N := c_N + \frac{\sqrt{2}(2\pi+1)}{\pi^{3/2}\exp(\pi/2) \sqrt{N+1/2}},
\end{equation}
which is decreasing with $\hat c_1 \approx 1.14$ and $\lim_{N\to\infty} \hat c_N=  \lim_{N\to\infty} c_N\approx 0.208$.

We observe above that the bound  \eqref{main3} on $E_N(z)=F(z)-F_N(z)$ holds for all complex $z$ in the first and third quadrants of the complex plane, and on the boundaries of those quadrants, the real and imaginary axes, while the bound \eqref{bb3} holds in the second and fourth quadrants for $|\mathrm{Im}(z)|\leq A_N/(2\sqrt{2})$. These bounds imply that the coefficients in the Maclaurin series of $F_N(z)$ are close to those of $F(z)$. Precisely, at least for $|z| < A_N/\sqrt{2}$,
$$
F(z) = \sum_{n=0}^\infty a_n z^n \;\;\; \mbox{ and } \;\;\;F_N(z) = \sum_{n=0}^\infty b_n z^n,
$$
with $a_n = F^{(n)}(0)/n!$, $b_n = F_N^{(n)}(0)/n!$. Thus, where $M_N=\sup_{|z|< \sqrt{\pi/2}}|E_N(z)|$, it follows from Cauchy's estimate \cite[Theorem 10.26]{Rudin} and the bounds \eqref{main3} and \eqref{bb3} that, for $N\geq 4$ so that $A_N/(2\sqrt{2})\geq \sqrt{\pi/2}$,
\begin{equation} \label{cpeb}
|a_n - b_n| = \frac{|E_N^{(n)}(0)|}{n!} \leq M_N \left(\frac{2}{\pi}\right)^{n/2}\leq  \hat c_N \left(\frac{2}{\pi}\right)^{n/2}\,\frac{\re^{-\pi (N-1/4)}}{\sqrt{N+1/2}}.
\end{equation}

\section{Approximating $C(x)$ and $S(x)$} \label{sec approx CS}
From \eqref{inter} we see that, for $x$ real,
\begin{equation} \label{CSinF}
C(x) = \mathrm{Re}\left(\sqrt{2}\,\re^{\ri\pi/4}(\textstyle{\frac{1}{2}}-F(\sqrt{\pi/2}\,x))\right), \;\,S(x) = \mathrm{Im}\left(\sqrt{2}\,\re^{\ri\pi/4}(\textstyle{\frac{1}{2}}-F(\sqrt{\pi/2}\,x))\right).
\end{equation}
Clearly, given the approximation $F_N(x)$ to $F(x)$, these relationships can be used to generate approximations for the Fresnels integrals $C(x)$ and $S(x)$. These approximations are defined, for $x\in\R$, by
\begin{equation} \label{CSNinFN}
 \begin{array}{l}
         C_N(x) = \mathrm{Re}\left(\sqrt{2}\,\re^{\ri\pi/4}(\textstyle{\frac{1}{2}}-F_N(\sqrt{\pi/2}\,x))\right),\\
         S_N(x) = \mathrm{Im}\left(\sqrt{2}\,\re^{\ri\pi/4}(\textstyle{\frac{1}{2}}-F_N(\sqrt{\pi/2}\,x))\right),
       \end{array}
\end{equation}
and are given explicitly in \eqref{CNdef} and \eqref{SNdef}. We note the similarity between \eqref{CNdef} and \eqref{SNdef} and the formulae \cite[(7.5.3)-(7.5.4)]{NIST}
\begin{eqnarray} \label{Crep}
C(x) & = & \textstyle{\frac{1}{2}} + f(x) \sin\left( \textstyle{\frac{1}{2}}\pi x^2\right) - g(x) \cos\left( \textstyle{\frac{1}{2}}\pi x^2\right),\\ \label{Srep}
S(x) & = & \textstyle{\frac{1}{2}} - f(x) \cos\left( \textstyle{\frac{1}{2}}\pi x^2\right) - g(x) \sin\left( \textstyle{\frac{1}{2}}\pi x^2\right),
\end{eqnarray}
 which express $C(x)$ and $S(x)$ in terms of the auxiliary functions, $f(x)$ and $g(x)$, for the Fresnel integrals \cite[\S7.2(iv)]{NIST}. Indeed, it follows from \cite[(7.7.10)-(7.7.11)]{NIST} that, for $x>0$, $f(x)$ and $g(x)$ have the integral representations
 $$
 f(x) = \frac{\sqrt{\pi}\, x^3}{2} \int_0^\infty \frac{\re^{-t^2}}{\left(\frac{\pi}{2}x^2\right)^2 + t^4}\, \rd t \;\mbox{ and }\; g(x) = \frac{x}{\sqrt{\pi}} \int_0^\infty \frac{t^2\re^{-t^2}}{\left(\frac{\pi}{2}x^2\right)^2 + t^4}\, \rd t,
 $$
 and, recalling that $A_N$ is linked to the quadrature step-size through \eqref{hchoice}, it is clear that,  for $x>0$, $\sqrt{\pi}\, x a_N\left(\frac{\pi}{2}x^2\right)/A_N$ and $\sqrt{\pi}\, x b_N\left(\frac{\pi}{2}x^2\right)/A_N$ can be  viewed  as quadrature approximations to these integrals.

The approximations \eqref{CNdef} and \eqref{SNdef} inherit the accuracy of $F_N(x)$ on the real line: from \eqref{CSinF} and \eqref{CSNinFN} we see, for $x\in\R$, that
\begin{equation} \label{errorCN}
|C(x)-C_N(x)| \leq \sqrt{2} \, |E_N(\sqrt{\pi/2}\,x)| \; \mbox{ and } |S(x)-S_N(x)| \leq \sqrt{2} \, |E_N(\sqrt{\pi/2}\,x)|.
\end{equation}
where $E_N(x)=F(x)-F_N(x)$. Thus the error bounds of the previous section can be applied. In particular, from \eqref{main3} and \eqref{MAIN3} it follows that both $|C(x)-C_N(x)|$ and $|S(x)-S_N(x)|$ are
\begin{equation} \label{CSNbounds}
\leq 2c_N \frac{\re^{-\pi N}}{\sqrt{2N+1}}, \quad \mbox{for } x\in \R,
\end{equation}
and
\begin{equation} \label{CSNbounds2}
\leq \sqrt{\pi}\,\tilde c_N |x|\,\frac{\re^{-\pi N}}{2N+1}, \quad \mbox{for } |x|\leq \sqrt{N+1/2}\,.
\end{equation}
Here $c_N<0.83$ and $\tilde c_N < 0.18$ are the decreasing sequences of positive numbers defined by \eqref{cNdef} and \eqref{tildec}, respectively.

These bounds show that $C_N(x)$ and $S_N(x)$ are exponentially convergent as $N\to\infty$, uniformly on the real line, so that very accurate approximations can be obtained with very small values of $N$ (\eqref{CSNbounds} shows that both $|C_N(x)-C(x)|$ and $|S_N(x)-S(x)|$ are $\leq 1.4\times 10^{-16}$ on the real line for $N\geq 11$). In \S\ref{sec NR} we will confirm the effectiveness of these approximations by numerical experiments, checking the accuracy of \eqref{CNdef} and \eqref{SNdef} by comparison with the power series \cite[\S7.6(i)]{NIST}
\begin{eqnarray} \label{CSpow}
 C(x)=\sum_{n=0}^{\infty} \frac{(-1)^n \left(\frac{1}{2}\pi\right)^{2n}x^{4n+1}}{(2n)!(4n+1)}, \quad
S(x)=\sum_{n=0}^{\infty} \frac{(-1)^n \left(\frac{1}{2}\pi\right)^{2n+1}x^{4n+3}}{(2n+1)!(4n+3)}.
\end{eqnarray}

It follows from the analyticity of $F_N(x)$ in the complex plane, discussed in \S\ref{subsec:complex}, that $F_N(x)$ has a power series convergent in $|x|< A_N/\sqrt{2}$, and from \eqref{CSNinFN} that $C_N(x)$ and $S_N(x)$ have convergent power series representations in $|x| < A_N/\sqrt{\pi}$. From the observations below \eqref{CSNstr} it is clear that, echoing \eqref{CSpow}, these take the form
\begin{eqnarray} \label{CSNpow}
C_N(x)=\sum_{n=0}^{\infty} \mathfrak{c}_n x^{4n+1}, \quad
S_N(x)=\sum_{n=0}^{\infty} \mathfrak{s}_n x^{4n+3}.
\end{eqnarray}
Further, it follows from \eqref{CSNinFN} and \eqref{cpeb} that the coefficients $\mathfrak{c}_n$ and $\mathfrak{s}_n$ are close to the corresponding coefficients of $C(x)$ and $S(x)$, with the difference having absolute value
\begin{equation} \label{coeer}
\leq \sqrt{2}\, \hat c_N \,\frac{\re^{-\pi (N-1/4)}}{\sqrt{N+1/2}},
\end{equation}
for $N\geq 4$, where $\hat c_N\leq \hat c_4 < 0.77$ is the decreasing sequence of positive numbers given by \eqref{hatcN}.
This implies that, near zero, where $C(x)$ has a simple zero and $S(x)$ a zero of order three, the approximations $C_N(x)$ and $S_N(x)$ retain small relative error.  For $C_N(x)$ this follows already from \eqref{CSNbounds2} but to see this for $S_N(x)$ we need the stronger bound implied by \eqref{coeer} that, for $|x|<1$,
\begin{equation} \label{Ssmall}
|S(x)-S_N(x)| \leq \sqrt{2}\,  \hat c_N \,\frac{\re^{-\pi (N-1/4)}}{\sqrt{N+1/2}}\sum_{n=0}^{\infty} |x|^{4n+3} =  \frac{|x|^3}{1-|x|^4}\,\frac{\sqrt{2}\, \hat c_N\,\re^{-\pi (N-1/4)}}{\sqrt{N+1/2}}.
\end{equation}

\begin{table}
\small
\begin{verbatim}
function [C,S] = fresnelCS(x,N)
% Evaluates approximations to the Fresnel integrals C(x) and S(x).
% x is a real scalar or matrix,
% N is a positive integer controlling accuracy (suggest N=12),
% C and S are the scalars/matrices of the same size as x approximating C(x) and S(x).
h = sqrt(pi/(N+0.5));
t = h*((N:-1:1)-0.5);  AN = pi/h; rootpi = sqrt(pi);
t2 = t.*t; t4 = t2.*t2; et2 = exp(-t2);
x2pi2 = (pi/2)*x.*x; x4 = x2pi2.*x2pi2;
a = et2(1)./(x4+t4(1)); b = t2(1)*a;
for n = 2:N
    term = et2(n)./(x4+t4(n));
    a = a + term;  b = b + t2(n)*term;
end
a = a.*x2pi2;
mx = (rootpi*AN)*x; Mx = (rootpi/AN)*x;
Chalf = 0.5*sign(mx); Shalf = Chalf;
select = abs(mx)<39;
if any(select)
    mxs = mx(select); shx = sinh(mxs); sx = sin(mxs);
    den = 0.5./(cos(mxs)+cosh(mxs));
    Chalf(select) = (shx+sx).*den;
    ssdiff = shx-sx;
    select2 = abs(mxs)<1;
    if any(select2)
        mxs = mxs(select2); mxs3 = mxs.*mxs.*mxs; mxs4 = mxs3.*mxs;
        ssdiff(select2) = mxs3.*(1/3 + mxs4.*(1/2520 ...
            + mxs4.*((1/19958400)+(0.001/653837184)*mxs4)));
    end
    Shalf(select) = ssdiff.*den;
end
cx2 = cos(x2pi2); sx2 = sin(x2pi2);
C = Chalf + Mx.*(a.*sx2-b.*cx2); S = Shalf - Mx.*(a.*cx2+b.*sx2);

\end{verbatim}
\normalsize
\caption{Matlab to evaluate $C_N(x)$ and $S_N(x)$ given by \eqref{CNdef} and \eqref{SNdef}. See \S\ref{sec approx CS} for details.}
\label{matlab_codeCS}
\end{table}

Table \ref{matlab_codeCS} shows the Matlab implementing \eqref{CNdef} and \eqref{SNdef} that we use in the next section. To evaluate
$(\sinh t\pm \sin t)/(\cosh t+\cos t)$,
with $t=\sqrt{\pi}\, A_N x$, in \eqref{CNdef} and \eqref{SNdef}, we note that,
for $|t|\geq 39$, $\cosh(t)+\cos(t)$ and $\exp(t)/2$ have the same value in double precision arithmetic, as do $\sinh t\pm \sin t$ and $\mathrm{sign}(t) \exp(t)/2$. Thus this expression evaluates as $\mathrm{sign}(t)$ in double precision arithmetic for $39\leq |t|\lessapprox 710$. To avoid underflow and reduce computation time, we evaluate it as $\mathrm{sign}(t)$ for $|t|\geq 39$.
For small $t$ there is an additional issue of loss of precision in evaluating $\sinh t - \sin t$ for $|t|$ small. This is avoided in Table \ref{matlab_codeCS} by using $\sinh t - \sin t = 2t^3/3! + 2t^7/7! + \dots$ for $|t|<1$, truncating after four terms as the 5th term is negligible in double precision.

\section{Numerical Results and Comparison of Methods} \label{sec NR}

In this section we show numerical computations that confirm and illustrate the theoretical error bounds in \S\ref{sec approx F} and \S\ref{sec approx CS}, and that explore the accuracy and efficiency of our new methods, through qualitative and quantitative comparisons with certain of the other computational methods described in \S\ref{sec Other}.

\begin{figure}[h]
\begin{center}
\includegraphics[width=5.85cm]{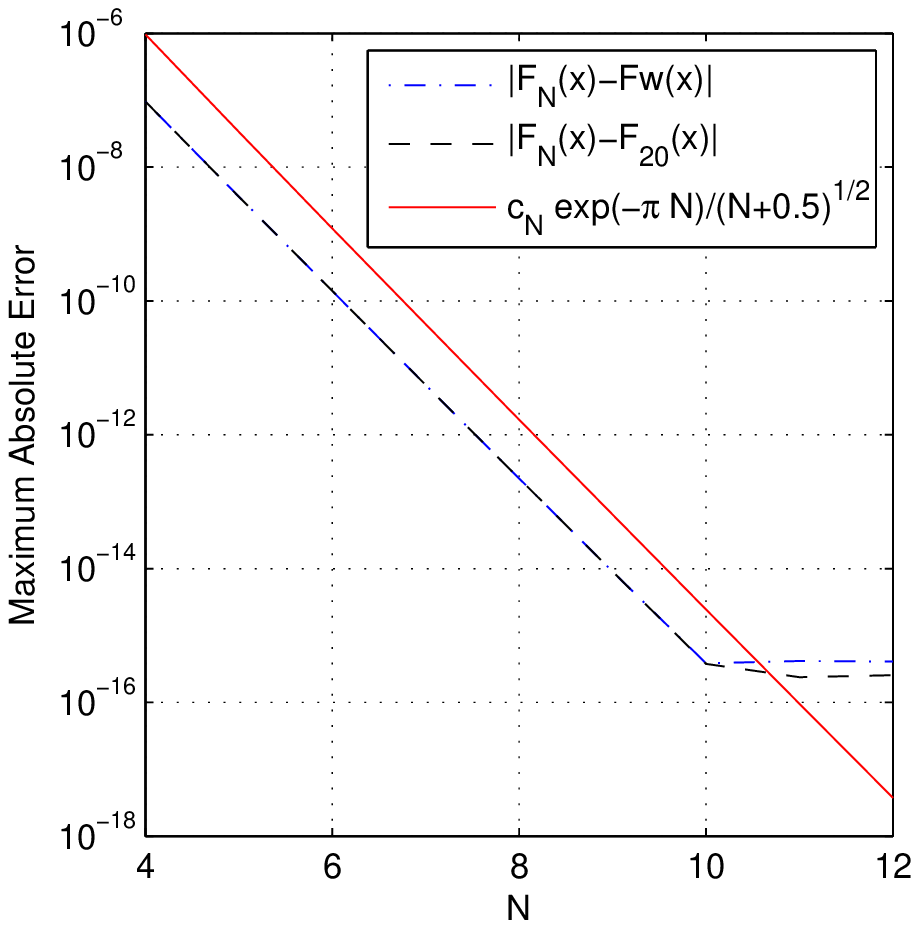}
\includegraphics[width=5.85cm]{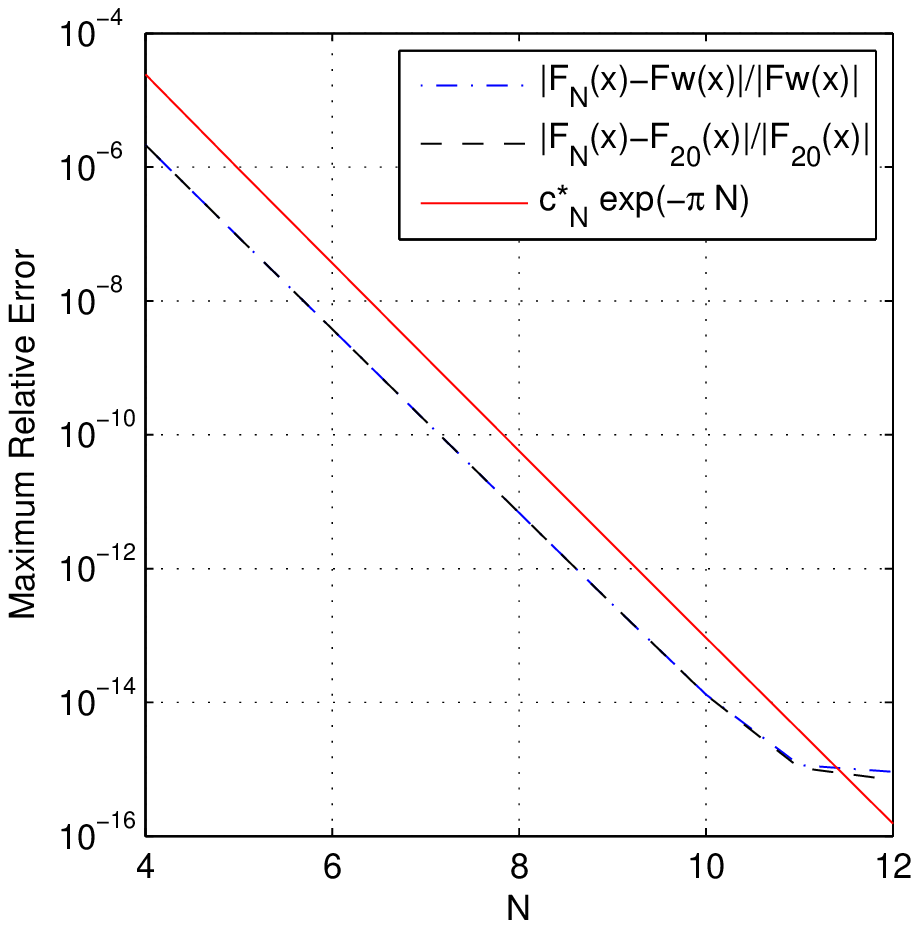}
\caption{Left hand side: maximum error, $\max_{x \geq 0}|F(x)-F_N(x)|$,  and its upper bound \eqref{main3} ({\color{red}{$-$}}), plotted against $N$, in one case where $F(x)$ is approximated by $Fw(x):=\re^{\ri x^2}w_{36}(\re^{\ri\pi/4}x)/2$ ({\color{blue}{$- \cdot - \cdot$}}) with $w_{36}(z)$ defined by \eqref{weid} and computed by the function in Table 1 of \cite{JA}, and in the other case where $F(x)$ is approximated by $F_{20}(x)$ ({\color{black}{$- - $}}). Right hand side: maximum relative error, $\max_{x \geq 0}|(F(x)-F_N(x))/F(x)|$,  and its upper bound \eqref{main4} ({\color{red}{$-$}}), plotted against $N$, where $F(x)$ is approximated in the two curves as on the left hand side. (All maximums are taken over 40,000 equally spaced points between 0 and 1,000, and all values of $F_N(x)$ are computed using the code in Table \ref{matlab_code}.)}
\label{fig1}
\end{center}
\end{figure}

In Figure \ref{fig1} it can be seen that the exponential convergence predicted by the bounds \eqref{main3} and \eqref{main4} is achieved, indeed these bounds overestimate their respective maximum errors by at most a factor of 10. Further, with $N$ as small as 12 it appears that we achieve maximum absolute and relative errors in $F_N(x)$ which are $<2.9\times 10^{-16}$ and $<9.3\times 10^{-16}$, respectively; these values are upper bounds whichever of the two methods for approximating $F(x)$ accurately is used. (We should add a note of caution here: the different approximations agree to high accuracy, but the accuracy of each approximation is limited, for large $x$, by the accuracy with which $\re^{\ri x^2}$ is computed.) These plots also verify the high accuracy of the approximation \eqref{weid} for $w(z)$ from \cite{JA}, at least for $\arg(z)=\pi/4$ and if $M$ is large enough in \eqref{weid}. Figure \ref{fig3} explores this in more detail: in each plot the trend is one of exponential convergence, but the convergence is not monotonic and is slower than that in Figure \ref{fig1}.

\begin{figure}[h]
\begin{center}
\includegraphics[width=5.85cm]{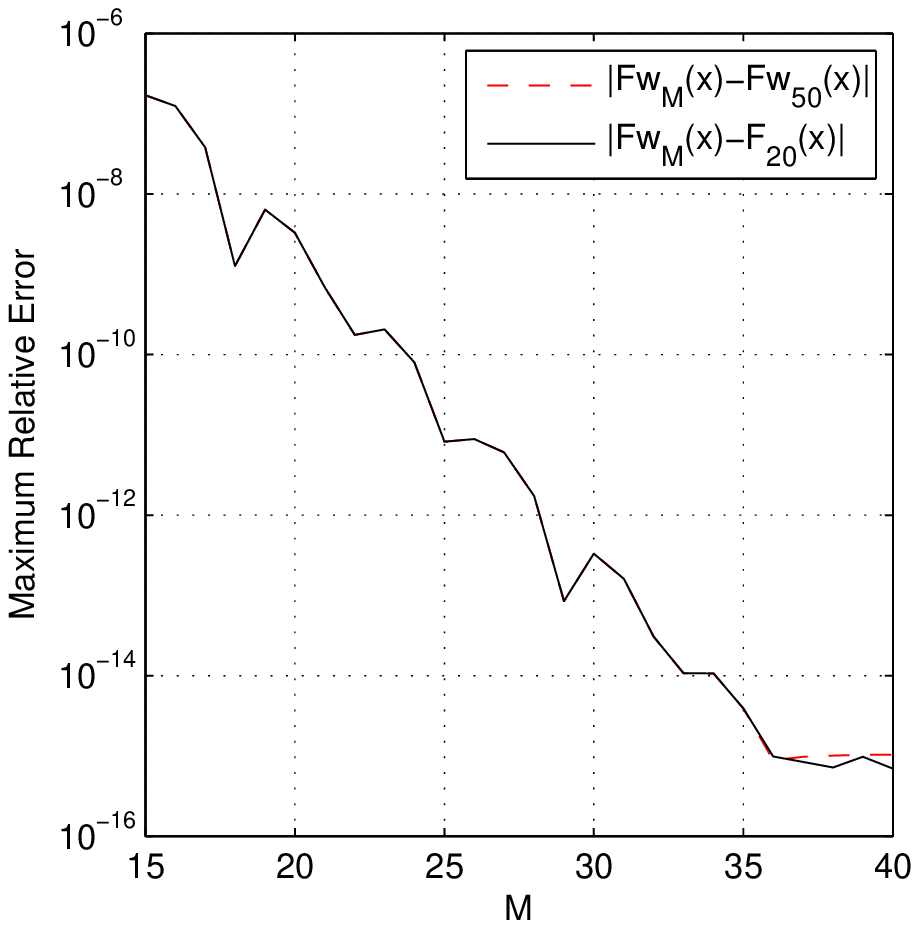}
\includegraphics[width=5.85cm]{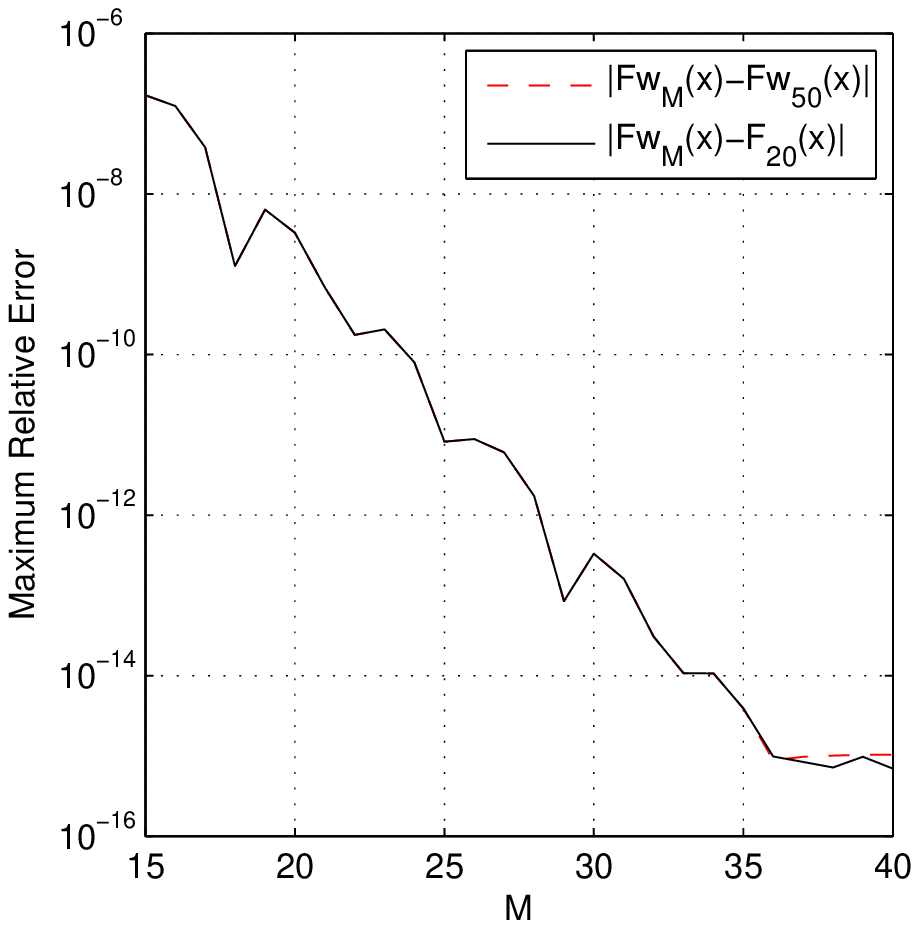}
\caption{Left hand side: maximum error, $\max_{x \geq 0}|F(x)-Fw(x)|$, where $Fw(x):=\re^{\ri x^2}w_M(\re^{\ri\pi/4}x)/2$ with $w_M(z)$ defined in \eqref{weid}. Right hand side: same, but maximum relative error, $\max_{x \geq 0}|(F(x)-Fw(x))/F(x)|$, is plotted against $M$. In each plot the two curves correspond to different methods for approximating the exact value of $F(x)$, either $F(x)\approx F_{20}(x)$ given by \eqref{FNdef} ({\color{black}{$-$}}), or $F(x)\approx Fw(x)$ with $M=50$ ({\color{red}{$- - $}}). (The maximums, as in Figure \ref{fig1}, are taken over 40,000 equally spaced points between 0 and 1,000.)}
\label{fig3}
\end{center}
\end{figure}

\begin{figure}[h]
\begin{center}
\includegraphics[width=5.85cm]{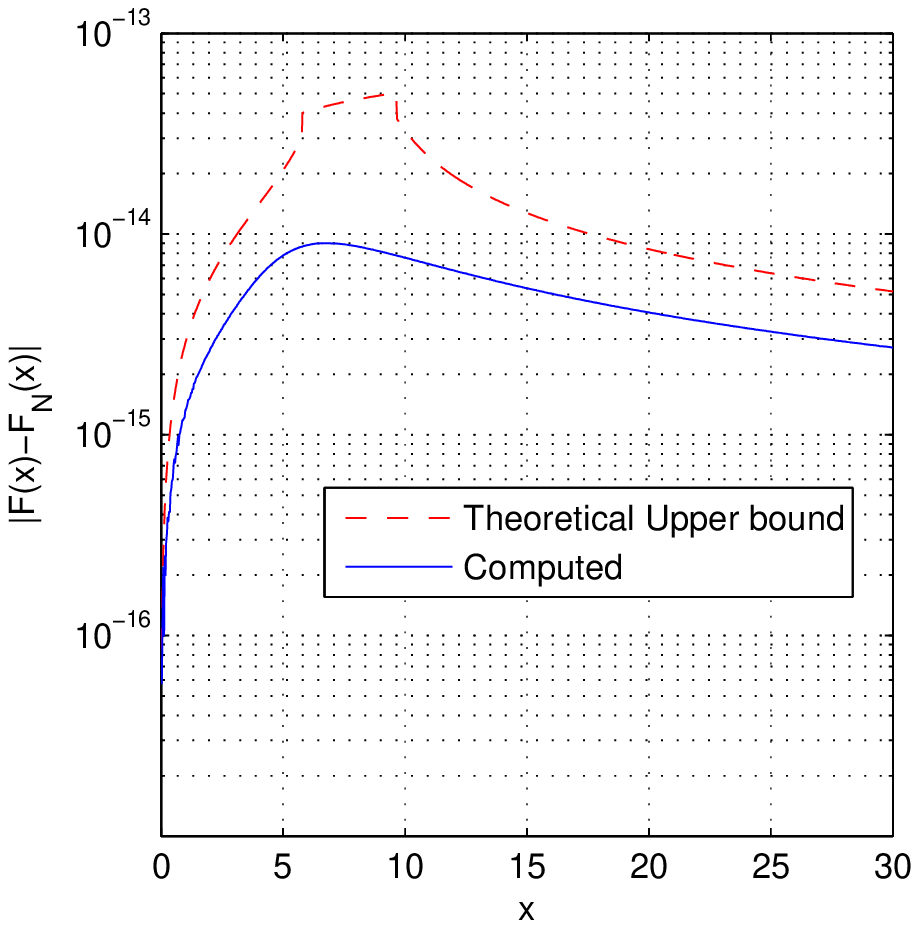}
\includegraphics[width=5.85cm]{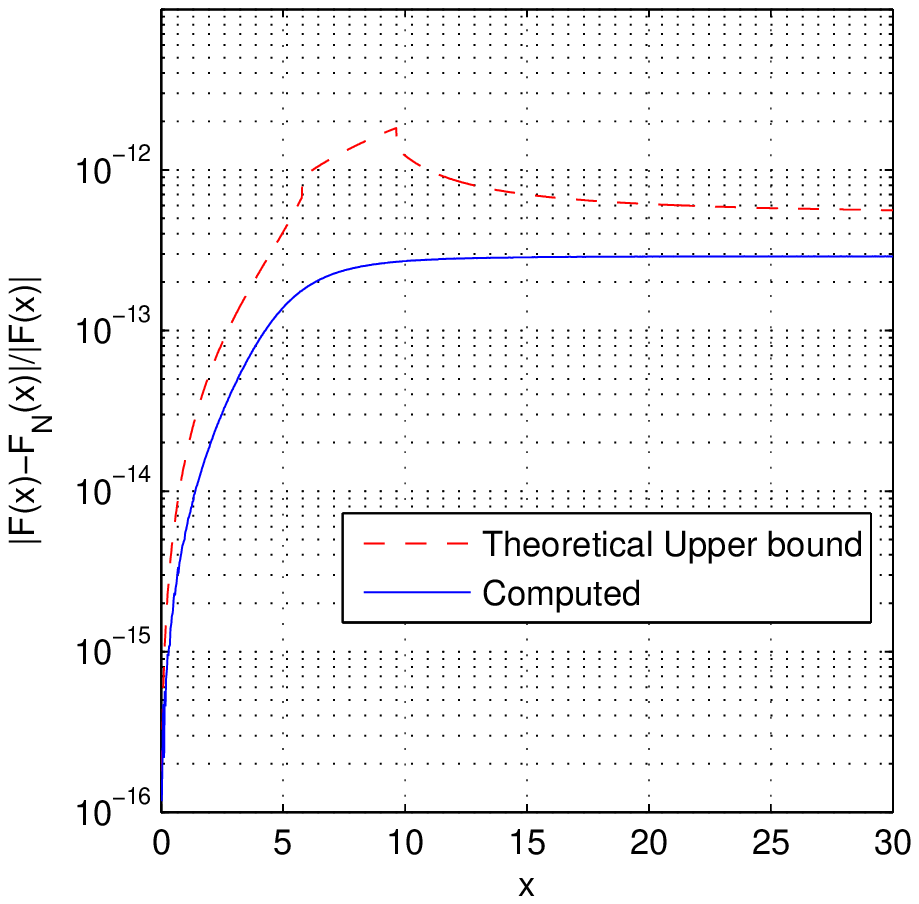}
\caption{Left hand side: absolute error, $|F(x)-F_N(x)|$ ({\color{blue}{$-$}}),  and its upper bound $\eta_N(x)$ given by \eqref{errb} ({\color{red}{$- -$}}), plotted against $x$. Right hand side: relative error, $|F(x)-F_N(x)|/|F(x)|$ ({\color{blue}{$-$}}),  and its upper bound $2(1+\sqrt{\pi}\, x)\eta_N(x)$ ({\color{red}{$- -$}}), plotted against $x$. In both plots $N=9$ and $F(x)$ is approximated by  $F_{20}(x)$.}
\label{fig2}
\end{center}
\end{figure}

In Figure \ref{fig2} we see that our pointwise theoretical error bounds are upper bounds as claimed, and that these bounds appear to capture the $x$-dependence of the errors fairly well, for example that $E_N(x)=O(x)$ as $x\to 0$, $=O(x^{-1})$ as $x\to\infty$, and that $E_N(x)$ reaches a maximum at about $x = \sqrt{2}\,A_N = \sqrt{\pi(2N+1)}$ ($\approx 7.7$ when $N=9$).

The above figures explore the accuracy of the approximation $F_N(x)$. Let us comment on efficiency. Most straightforward is a comparison of the Matlab function \verb+F(x,N)+ in Table \ref{matlab_code} with computation of $F(x)$ via the Matlab code \verb+Fw(x,M)=exp(i*x.^2).*cef(exp(i*pi/4)*x,M)/2+ that uses \verb+cef.m+ from \cite{JA} implementing \eqref{weid}. Both \verb+F(x,N)+ and \verb+cef(x,M)+ are optimised for efficiency when \verb+x+ is a large vector. The main cost in computation of $F(x)$ via \verb+cef+ when $x$ is a large vector is a complex vector exponential (for $\re^{\ri x^2}$), and the $M$ complex vector multiplications and $M$ additions required to evaluate the polynomial \eqref{weid} using Horner's algorithm.  In comparison, evaluation of $F(x)$ using \verb+F(x,N)+ in Table \ref{matlab_code} requires 2 complex vector exponentials, and slightly more than $N$ real vector multiplications/divisions, real vector additions, complex vector multiplications, and complex vector additions. From Figures \ref{fig1} and \ref{fig3} we read off that to achieve absolute and relative errors below $10^{-8}$ requires $N=6$ and $M=18$; to achieve errors below $10^{-15}$ requires $N=12$ and $M=36$. Thus computing $F(x)$ via \verb+F(x,N)+ requires a substantially lower operation count than computing via \verb+cef+. (We note, moreover, as discussed in \S\ref{sec Other} and in \S7 of \cite{JA}, that, at least for intermediate values of $x$ ($1.5\leq x \leq 5$), the operation counts for \verb+cef+ are lower than those of the method for $w(z)$ of \cite{PW1,PW2}.)

To test whether \verb+F(x,N)+ is faster we have compared computation times in Matlab (version 7.8.0.347 (R2009a) on a laptop with dual 2.4GHz P8600 Intel processors) between \verb+Fw(x,36)+ and \verb+F(x,12)+ when  \verb+x+ is a length $10^7$ vector of equally spaced numbers between 0 and 1,000. The average elapsed times were 11.1 and 15.6 seconds, respectively, so that \verb+F(x,12)+ is almost 50\% faster.

Turning to $C(x)$ and $S(x)$, in Figure \ref{fig4} we have plotted the maximum values of the absolute and relative errors in $S_N(x)$ and $C_N(x)$, computed using \verb+fresnelCS+ in Table \ref{matlab_codeCS}. As accurate values for $C(x)$ and $S(x)$ we use $C_{20}(x)$ and $S_{20}(x)$ for $x>1.5$ while, for $0<x<1.5$ (following \cite{NR}) we approximate by the series \eqref{CSpow} truncated after 15 terms, evaluated by the Horner algorithm. Exponential convergence is seen in Figure \ref{fig4}: the absolute errors are $\leq 4.5\times 10^{-16}$ for $N\geq 11$, the maximum relative error in $C_N(x)$ is $\approx 3.6\times 10^{-15}$ for $N=11$ but that in $S_N(x)$ as large as $2.7\times 10^{-13}$. These errors may be entirely acceptable, but the truncated power series \eqref{CSpow} must achieve smaller errors for small $x$ and is cheaper to evaluate. (Evaluating at $10^7$ equally spaced points between 0 and $1.5$ takes 2.9 times longer in Matlab with \verb+fresnelCS+ than evaluating 15 terms of both the series \eqref{CSpow} via Horner's algorithm.)

\begin{figure}[h]
\begin{center}
\includegraphics[width=5.85cm]{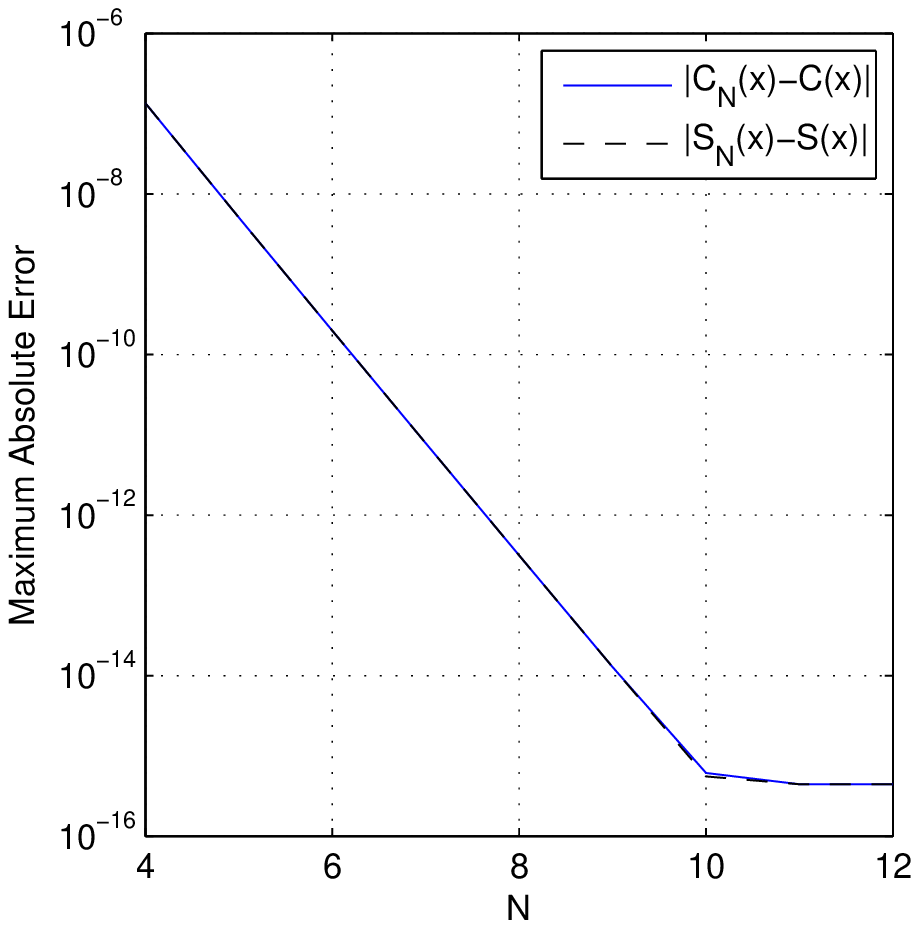}
\includegraphics[width=5.85cm]{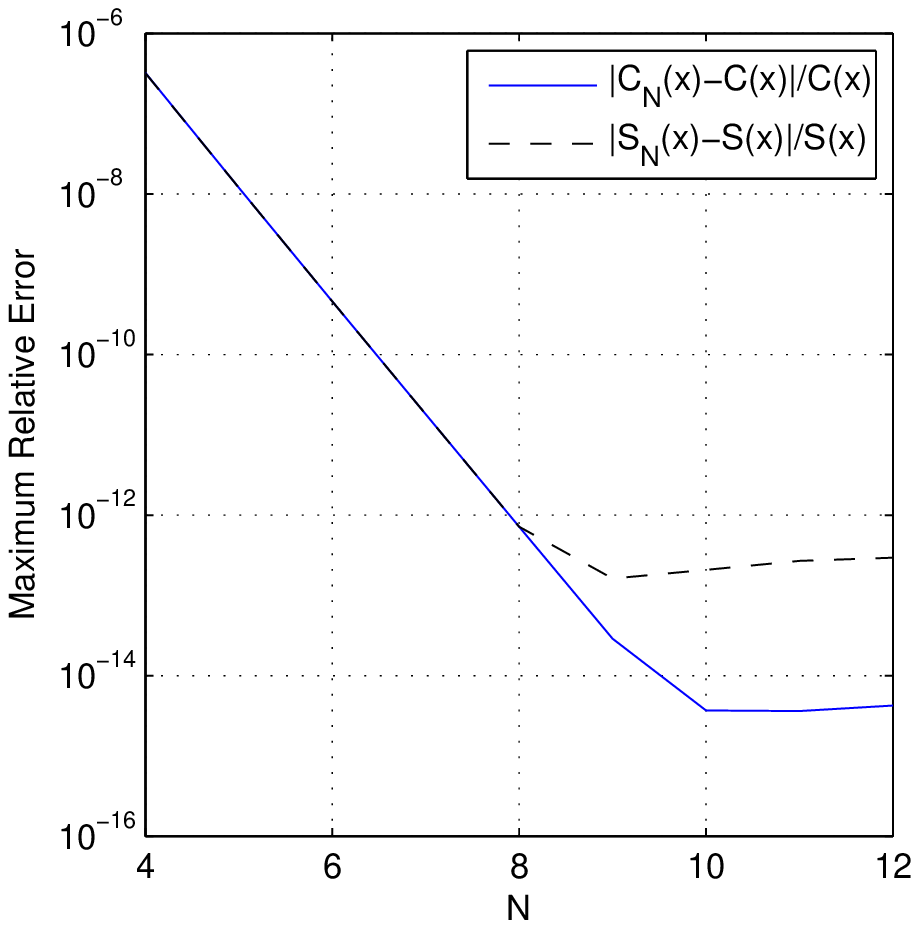}
\caption{Left hand side: maximum values of $|C_N(x)-C(x)|$ and $|S_N(x)-S(x)|$ on $0\leq x\leq 20$.  Right hand side: maximum values of $|C_N(x)-C(x)|/C(x)$ and $|S_N(x)-S(x)|/S(x)$ on $0\leq x\leq 20$.}
\label{fig4}
\end{center}
\end{figure}

\section{Concluding Remarks} \label{sec conclude}

To conclude, we have presented in this paper new approximations for the Fresnel integrals, derived from and inspired by modified trapezium rule approximations previously suggested for the complementary error function of complex argument in \cite{MR,HR}. These approximations are simple to implement (Matlab codes are included in Tables \ref{matlab_code} and \ref{matlab_codeCS}): the computation of $F_N(x)$ requires a couple of complex exponentiations and a short summation to compute a quadrature sum, and that of $C_N(x)$ and $S_N(x)$ evaluation of trigonometric and hyperbolic functions and a similar short summation. 

Operation counts and timings suggest that $F_N(x)$ with $N=12$ may be faster than previous methods, at least for intermediate values of $|x|$. In particular, the Matlab function in Table \ref{matlab_code} outperforms that in  Table 1 of \cite{JA}  for this application. The code for $S_N(x)$ and $C_N(x)$ is faster still, but the power series \eqref{CSpow}, truncated after 15 terms, are more accurate and efficient on the interval $[0,1.5]$, this conclusion endorsing recommendations in \cite{NR}.

Part of the motivation for this paper was a remark in Weideman \cite{JA} regarding the modified trapezium rule methods of \cite{MR,HR} for computing $\erfc(z)$, that they are
``very accurate, provided for given $z$ and $N$ [the finite number of quadrature points retained] the optimal stepsize $h$ is selected. It is not easy, however, to determine this optimal $h$ a priori.'' At least as far as computing $\erfc(z)$ for $\arg(z)= -\pi/4$ is concerned (which, by \eqref{Frelw}, is the same as computing $F(x)$) this problem is solved in this paper, so that the effectiveness of the modified trapezium rule methods of \cite{MR,HR,JA} is clearly demonstrated. We hope that the methodology and positive results of this paper will inspire further applications of this truncated, modified trapezium rule method.

We finish by flagging that the modified trapezium rule method that we have used in this paper is applicable widely to the evaluation of integrals on the real line of functions that are analytic but with poles near the real axis. Indeed, general theories of the method are presented in Bialecki \cite{bialecki}, Hunter \cite{hunter92} (and see \cite{crouch}, \cite[\S5.1.4]{prythe}), and  in the thesis of one of the authors \cite{laporte}, where the emphasis is on the particular case
\eqref{pc},
where the analytic function $f(t)=O(1)$ as $t\to \pm \infty$. Integrals of the form \eqref{pc} arise in probabilistic applications \cite{crouch} and  as representations in integral form of solutions to linear PDEs with constant coefficients, after solution by Fourier transform methods and deformation of the path of integration to a steepest descent path. One example which continues to be the subject of computational studies \cite{CWH95,nedelec,greengard} is the Green's function for the Helmholtz equation $\Delta u + k^2 u =0$ in a half-space with an impedance boundary condition, $\partial u/\partial n = \ri k \beta u$. Representations for this Green's function in terms of a steepest descent path integral of the form \eqref{pc}, in both the 2D and 3D cases, are given in \cite{CWH95}, and the application of the truncated modified trapezium rule method is discussed in \cite{laporte}.

\begin{acknowledgements}
This paper is dedicated to David Hunter, formerly of the University of Bradford, UK, who celebrated his 80th birthday in April 2013. Sadly David passed away on 15 August 2013. David was a kind and gentle man and a fine mathematician and teacher and the second author acknowledges his gratitude for David's contribution to his education as a numerical analyst at Bradford in the 80s. We also acknowledge the very helpful and thorough comments of the two anonymous referees.
\end{acknowledgements}

\appendix

\normalsize
\section{Appendix: Bounds on $\mathrm{erfc}$} \label{appendix}
In this appendix we prove Theorem \ref{lemma} as a corollary of bounds on $\erfc$ in the right hand complex plane contained in Theorem \ref{thm:erfc} below. In particular \eqref{Flb1} follows immediately from \eqref{Frelw} and the first bound in \eqref{erfclb}, while \eqref{Flb2} follows from \eqref{symm}, \eqref{Frelw}, and the second of the bounds \eqref{erfclb}. The bounds in Theorem \ref{thm:erfc} are well-known in the case $z\geq 0$ \cite[(7.8.2)-(7.8.3)]{NIST}, and the second bound (equivalent by \eqref{Frelw} to the bound $|w(z)| \leq 1$ for $\mathrm{Im}(z) \geq 0$) is recently proved by an alternative argument on p.~413 of \cite{arens}.
\begin{theorem} \label{thm:erfc}
For $z=x+\ri y$ with $x\geq 0$, $y\in \R$, we have that
\begin{equation} \label{erfclb}
|\erfc(z)|\geq \frac{\re^{y^2-x^2}}{\sqrt{(1+\sqrt{\pi}\,x)^2 + \pi y^2}}\geq \frac{\re^{y^2-x^2}}{1+\sqrt{\pi}\,|z|} \;\mbox{ and }\; |\erfc(z)| \leq \re^{y^2-x^2}.
\end{equation}
\end{theorem}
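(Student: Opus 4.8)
The plan is to prove the two bounds in \eqref{erfclb} separately, first disposing of the elementary second inequality and of the upper bound, then concentrating on the first (lower) inequality, which carries the real content. The second inequality in \eqref{erfclb} is purely algebraic: since $(1+\sqrt\pi\,x)^2+\pi y^2 = 1+2\sqrt\pi\,x+\pi|z|^2$ while $(1+\sqrt\pi\,|z|)^2 = 1+2\sqrt\pi\,|z|+\pi|z|^2$, and $|z|\geq x\geq 0$, the denominator only grows on passing from the first expression to the second. For the upper bound I would start from $\erfc(z)=\frac{2}{\sqrt\pi}\int_z^\infty \re^{-s^2}\,\rd s$ and deform the contour onto the horizontal ray $s=t+\ri y$, $t\in[x,\infty)$ (legitimate by analyticity and decay as $\mathrm{Re}(s)\to+\infty$), giving $\erfc(z)=\frac{2}{\sqrt\pi}\,\re^{y^2}\int_x^\infty \re^{-t^2}\re^{-2\ri yt}\,\rd t$. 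Taking absolute values gives $|\erfc(z)|\leq \re^{y^2}\erfc(x)$, and the elementary estimate $\erfc(x)\leq\re^{-x^2}$ for $x\geq0$ (from $\int_x^\infty \re^{-t^2}\rd t=\int_0^\infty \re^{-(x+u)^2}\rd u\leq\re^{-x^2}\int_0^\infty\re^{-u^2}\rd u$) yields $|\erfc(z)|\leq\re^{y^2-x^2}$.

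For the lower bound I would recast it in a form amenable to the maximum principle. Since $|\re^{z^2}|=\re^{x^2-y^2}$ and $(1+\sqrt\pi\,x)^2+\pi y^2=|1+\sqrt\pi\,z|^2$, the first inequality in \eqref{erfclb} is equivalent to $|K(z)|\geq 1$ for $\mathrm{Re}(z)\geq 0$, where $K(z):=(1+\sqrt\pi\,z)\,\re^{z^2}\erfc(z)$. The function $K$ is analytic and nonvanishing on the closed right half-plane (using that all zeros of $\erfc$ have negative real part, cf.\ \cite{FE}, and that $1+\sqrt\pi\,z\neq0$ there), and from the asymptotics $\re^{z^2}\erfc(z)\sim 1/(\sqrt\pi\,z)$, valid for $|\arg z|<3\pi/4$, one has $K(z)\to 1$ as $z\to\infty$ in the closed right half-plane. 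Hence $1/K$ is analytic and bounded there, and the Phragm\'en--Lindel\"of form of the maximum principle used in \eqref{pl} reduces the claim $|1/K|\leq1$ to the imaginary axis.

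On $z=\ri y$ the bound becomes explicit. From the definition of $w$ (with $z=-y$) one has $\erfc(\ri y)=\re^{y^2}w(-y)$, and writing $w(y)=\re^{-y^2}+\tfrac{2\ri}{\sqrt\pi}D(y)$ with $D(y)=\re^{-y^2}\int_0^y\re^{t^2}\rd t$, a short calculation gives $|K(\ri y)|^2=(1+\pi y^2)|w(y)|^2=(1+\pi y^2)\,\re^{-2y^2}\bigl(1+\tfrac4\pi A(y)^2\bigr)$, where $A(y):=\int_0^y\re^{t^2}\rd t$. Thus the task collapses to the real-variable inequality $(1+\pi y^2)\bigl(1+\tfrac4\pi A^2\bigr)\geq\re^{2y^2}$ for $y\geq0$. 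This I would prove in two short steps: first, the elementary inequality $(1+a^2)(1+b^2)\geq(1+ab)^2$ with $a=\sqrt\pi\,y$ and $b=\tfrac{2}{\sqrt\pi}A$ gives $(1+\pi y^2)(1+\tfrac4\pi A^2)\geq(1+2yA)^2$; second, $1+2yA\geq\re^{y^2}$, because $g(y):=1+2yA(y)-\re^{y^2}$ satisfies $g(0)=0$ and $g'(y)=2A(y)\geq0$. Combining the two gives the claim, and evenness in $y$ extends it to the whole imaginary axis.

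The conceptual obstacle is that a lower bound on $|\erfc|$ cannot be read off the integral representation directly; the crux is to introduce exactly the right rational correction factor $1+\sqrt\pi\,z$, chosen so that $K(z)\to1$ at infinity and hence $1/K$ is bounded and the maximum principle applies. Given that, I expect the genuinely substantive step to be the boundary inequality on the imaginary axis, which is resolved cleanly by the Cauchy--Schwarz step together with the one-line monotonicity $g'(y)=2\int_0^y\re^{t^2}\rd t\geq0$. A secondary technical point to handle carefully is the justification of the maximum-principle reduction on the unbounded half-plane (nonvanishing of $K$ and its uniform limit at infinity); alternatively one may apply \eqref{pl} on the first quadrant, where the positive real axis contributes the already-known real-argument bound and the positive imaginary axis contributes the inequality just proved.
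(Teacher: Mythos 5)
Your proposal is correct, and while the overall architecture for the lower bound coincides with the paper's (introduce $\mathcal G(z)=(1+\sqrt{\pi}\,z)\re^{z^2}\erfc(z)$, show $1/\mathcal G$ is bounded and analytic in the right half-plane, and reduce to the imaginary axis by a Phragm\'en--Lindel\"of maximum principle), both of your boundary arguments differ from the paper's and are arguably cleaner. For the imaginary-axis inequality $(1+\pi y^2)\bigl(1+\tfrac4\pi A(y)^2\bigr)\geq\re^{2y^2}$, the paper splits into $0\leq y\leq1$ (where it uses $A(y)\geq y$ and a calculus exercise on $(1+(\pi+4/\pi)y^2+4y^4)\re^{-2y^2}$) and $y\geq1$ (where it integrates by parts to get $A(y)>\re^{y^2}/(2y)$); your combination of $(1+a^2)(1+b^2)\geq(1+ab)^2$ with the monotonicity $g'(y)=2A(y)\geq0$ for $g(y)=1+2yA(y)-\re^{y^2}$ handles all $y\geq0$ in one stroke and checks correctly ($g'(y)=2A+2y\re^{y^2}-2y\re^{y^2}=2A$). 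For the upper bound $|\erfc(z)|\leq\re^{y^2-x^2}$, the paper again goes through the maximum principle and a power-series coefficient estimate on the imaginary axis, whereas your horizontal contour shift gives $|\erfc(z)|\leq\re^{y^2}\erfc(x)\leq\re^{y^2-x^2}$ directly and elementarily; the deformation is justified since the vertical segment at $\mathrm{Re}(s)=R$ contributes $O(\re^{y^2-R^2})\to0$. The one place where your write-up is lighter than the paper's is the justification of the maximum-principle step itself: the paper is careful to note that $1/\mathcal G$ need not vanish at infinity (it tends to $1$), so it first applies the standard maximum principle to $\mathcal G_\alpha(z)=(1+\sqrt{\pi}\,z)^\alpha\re^{z^2}\erfc(z)$ with $\alpha>1$, whose reciprocal does vanish at infinity, and then lets $\alpha\to1^+$. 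You flag this as a point needing care and cite \eqref{pl}, which is adequate, but if you want a self-contained proof you should either reproduce an argument of this kind or spell out the quadrant version you sketch at the end.
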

\begin{proof}
The first of the bounds \eqref{erfclb} is equivalent to the bound
\begin{equation} \label{erfclb2}
|\mathcal{G}(z)| \geq 1, \quad \mbox{for } \mathrm{Re}(z) \geq 0,
\end{equation}
where $\mathcal{G}(z) = (1+\sqrt{\pi}\, z) \re^{z^2}\erfc(z)$ is an entire function which has the properties that $\mathcal{G}(0)=1$ and $\mathcal{G}(z)\to 1$ as $|z|\to \infty$ in the right hand plane, uniformly in $\arg(z)$ \cite[(7.1.23)]{AS}. (These properties imply that the first of the bounds \eqref{erfclb} is sharp for $z=0$ and in the limit $|z|\to \infty$.) We will show \eqref{erfclb2} by showing that \eqref{erfclb2} holds for all $z$ in the right hand plane if it holds on the imaginary axis, and then showing that \eqref{erfclb2} holds on the imaginary axis.

To see that it is enough to prove that \eqref{erfclb2} holds for imaginary $z$, observe that, since $\erfc(z)$ has no zeros in the right hand complex plane \cite{ON,FE} (or on the imaginary axis where ${\mathrm{Re}}(\erfc(z)) = 1$, see \eqref{thing}), the function $\mathcal H(z) := 1/\mathcal{G}(z)$ is also analytic in the right hand complex plane and is continuous up to the imaginary axis. Moreover, $\mathcal{H}(z)$ is bounded in the right hand plane since, as observed above, $\mathcal{G}(z)\to 1$ as $|z|\to \infty$ in the right hand plane (uniformly in $\arg(z)$).  Since $\mathcal{H}(z)$ is bounded in the right hand plane, it follows from the maximum principle that
\begin{equation} \label{sup}
\sup_{\mathrm{Re}(z) \geq 0} |\mathcal{H}(z)| = \sup_{\mathrm{Re}(z) = 0} |\mathcal{H}(z)|.
\end{equation}
To see this, note that this equality holds for $\mathcal{H}_\alpha(z) := 1/\mathcal{G}_\alpha(z)$, with $\alpha >1$, where $\mathcal{G}_\alpha(z) := (1+\sqrt{\pi}\, z)^\alpha \re^{z^2}\erfc(z)$ with the branch cut taken as the negative real axis. This is clear since $\mathcal{H}_\alpha(z)$ is analytic in the right half-plane, continuous up to the imaginary axis, and vanishes at infinity, so that the standard maximum principle implies that  $\mathcal{H}_\alpha(z)$ takes its maximum value on the imaginary axis. But then \eqref{sup} follows by taking the limit $\alpha \to 1^+$.

In view of \eqref{sup}, to establish \eqref{erfclb2} we need only show that it holds for $z=\ri y$ with $y\in \R$; indeed, establishing this bound for $y\geq 0$ is sufficient since $\erfc(-\ri y)=\overline{\erfc(\ri y)}$.
Now, for $z=\ri y$ with $y\geq 0$, using \cite[(7.5.1)]{NIST}, which implies
\begin{equation} \label{thing}
\re^{z^2}\erfc(z)= \re^{-y^2}\left(1 -\frac{2\ri}{\sqrt{\pi}}\int_0^y \re^{t^2} \rd t\right)
\end{equation}
we see that
\begin{eqnarray} \label{norms}
|\mathcal{G}(\ri y)|^2 & =&	(1+\pi y^2) \re^{-2y^2}\left(1 + \frac{4}{\pi}\left(\int_0^y \re^{t^2} \rd t\right)^2\right)\\ \nonumber
& \geq & (1+\pi y^2) \re^{-2y^2}\left(1 +\frac{4}{\pi}y^2\right)\\\nonumber
&=& \left(1+ \left(\pi + \frac{4}{\pi}\right) y^2 + 4 y^4\right) \re^{-2y^2}.
\end{eqnarray}
It is an easy calculus exercise to show the right hand side takes its minimum value on $[0,1]$ at either 0 or 1, and hence that $|\mathcal{G}(\ri y)| \geq 1$, for $0\leq y \leq 1$, since $|\mathcal{G}(\ri)|^2 > (5+\pi)/\re^2>8/2.8^2>1$. Further, \eqref{norms} implies that
\begin{equation} \nonumber
|\mathcal{G}(\ri y)| \geq 2 y \re^{-y^2}\int_0^y \re^{t^2} \rd t
\end{equation}
and, for $y\geq 1$, it follows on integrating by parts that
\begin{eqnarray*}
\int_0^y \re^{t^2} \rd t  = \int_0^1 \re^{t^2} \rd t + \int_1^y e^{t^2} \rd t &= &\int_0^1\re^{t^2}\rd t + \frac{\re^{y^2}}{2y} - \frac{\re}{2} + \int_1^y \frac{\re^{t^2}}{2t^2}\rd t\\
 &> &\int_0^1(1+t^2 + \tfrac{1}{2}t^4)\rd t + \frac{\re^{y^2}}{2y} - \frac{\re}{2} > \frac{\re^{y^2}}{2y},
\end{eqnarray*}
since $\re < 2.8 < 2(1+1/3+1/10)$. Thus $|\mathcal{G}(\ri y)|\geq 1$ on $[1,\infty)$ and the bound \eqref{erfclb2} is proved.

Similarly,
\begin{equation} \label{t2}
\sup_{\mathrm{Re}(z) \geq 0} |\re^{-z^2}\erfc(z)| = \sup_{\mathrm{Re}(z) = 0} |\re^{-z^2}\erfc(z)| = \sup_{y\geq 0} |\re^{-y^2}\erfc(\ri y)|.
\end{equation}
Further, \eqref{thing} implies that, for $y\geq 0$,
\begin{eqnarray*}
|\erfc(\ri y)|^2-1 &=& \frac{4}{\pi}\left(\int_0^y \re^{t^2} \rd t\right)^2 = \frac{4y^2}{\pi}\left(\sum_{n=0}^\infty \frac{y^{2n}}{n!(2n+1)}\right)^2 \\
&=& \frac{2y^2}{\pi} \sum_{n=0}^\infty a_n y^{2n} \leq \frac{2}{\pi}\left(\re^{2y^2}-1\right)
\end{eqnarray*}
where
$$
a_n = \sum_{m=0}^n \frac{2}{m!(n-m)!(2m+1)(2(n-m)+1)} \leq \frac{2}{n+1}\sum_{m=0}^n\frac{1}{m!(n-m)!} = \frac{2^{n+1}}{(n+1)!}.
$$
Thus, for $y\geq 0$,
\begin{equation} \nonumber
|\re^{-y^2}\erfc(\ri y)|^2 \leq \frac{2}{\pi} + \left(1-\frac{2}{\pi}\right) \re^{-2y^2} \leq 1.
\end{equation}
Combining this with \eqref{t2} we see that the second of the bounds \eqref{erfclb} holds.
\end{proof}
\end{document}